\newtheorem{theorem}{Theorem}[section]
\newtheorem{definition}[theorem]{Definition}
\newtheorem{proposition}[theorem]{Proposition}
\newtheorem{lemma}[theorem]{Lemma}
\newtheorem{claim}[theorem]{Claim}
\newtheorem{remark}[theorem]{Remark}
\newtheorem{observation}[theorem]{Observation}
\renewcommand{\Re}{\mathbb{R}}
\renewcommand{\L}{\mathcal{L}}
\newenvironment{proof}[1][Proof]{ \noindent \textbf{#1: }}{$\Box$
	\bigskip}
\renewcommand{\Re}{\mathbb{R}}
\renewcommand{\L}{\mathcal{L}}
\newcommand{\F}{\mathcal{F}}
\newcommand{\N}{\mathbb{N}}
\newcommand{\C}{\mathcal{C}}
\newcommand{\G}{\mathcal{G}}
\newcommand{\A}{\mathcal{A}}
\title{An $(\aleph_0,k+2)$-Theorem for $k$-Transversals\footnote{A preliminary version of the paper, which contains a weaker version of the results, was published at the SoCG 2022 conference~\cite{KP22}.}}
\author{Chaya Keller\thanks{Department of Computer Science, Ariel University, Israel. \texttt{chayak@ariel.ac.il}. Research partially supported by the Israel Science Foundation (grant no. 1065/20).}
	\mbox{ }
	and Micha A. Perles\thanks{Einstein Institute of Mathematics, Hebrew University, Jerusalem, Israel.
		\texttt{perles@math.huji.ac.il}}
}
\begin{document}

\maketitle

\begin{abstract}
	A family $\mathcal{F}$ of sets satisfies the $(p,q)$-property if among every $p$ members of $\mathcal{F}$, 
	some $q$ can be pierced by a single point. The celebrated $(p,q)$-theorem of Alon and Kleitman asserts 
	that for any $p \geq q \geq d+1$, any family $\mathcal{F}$ of compact convex sets in $\mathbb{R}^d$ that 
	satisfies the $(p,q)$-property can be pierced by a finite number $c(p,q,d)$ of points. A similar
	theorem with respect to piercing by $(d-1)$-dimensional flats, called $(d-1)$-transversals, was
	obtained by Alon and Kalai.
	
	In this paper we prove the following result, which can be viewed as an $(\aleph_0,k+2)$-theorem
	with respect to $k$-transversals: Let $\mathcal{F}$ be an infinite family of closed balls in $\mathbb{R}^d$, and let
	$0 \leq k < d$. If among every $\aleph_0$ elements of $\mathcal{F}$, some $k+2$ can be pierced by a 
	$k$-dimensional flat, then $\mathcal{F}$ can be pierced by a finite number of $k$-dimensional flats. We derive this result as a corollary of a more general result which proves the same assertion for families of not necessarily convex objects called \emph{near-balls}, to be defined below.

	
	This is the first $(p,q)$-theorem in which the assumption is weakened to an $(\infty,\cdot)$
	assumption. Our proofs combine geometric and topological tools.
\end{abstract}

\section{Introduction}

\subsection{Background}

\noindent \textbf{Helly's theorem and the $(p,q)$-theorem.} The classical Helly's theorem~\cite{Helly23} asserts that if $\mathcal{F}$ is a family of compact convex sets in $\mathbb{R}^d$ and every $d+1$ (or fewer) members of $\mathcal{F}$ have a non-empty intersection, then the whole family $\mathcal{F}$ has a non-empty intersection. 

For a pair of positive integers $p \geq q$, a family $\mathcal{F}$ of sets in $\Re^d$ is said to satisfy the $(p,q)$-property if $|\mathcal{F}|\ge p$, none of the sets in $\mathcal{F}$ is empty, and among every $p$ sets of $\mathcal{F}$, some $q$ have a non-empty intersection, or equivalently, can be pierced by a single point.  A set $P \subset \mathbb{R}^d$ is called a {\em transversal} for $\mathcal{F}$ if it has a non-empty intersection with every member of $\mathcal{F}$, or equivalently, if every member of $\mathcal{F}$ is pierced by an element of $P$. In this language, Helly's theorem states that any family of compact convex sets in $\mathbb{R}^d$ that satisfies the $(d+1,d+1)$-property, has a singleton transversal.

One of the best-known generalizations of Helly's theorem is the $(p,q)$-theorem of Alon and Kleitman (1992), which resolved a 35-year old conjecture of Hadwiger and Debrunner~\cite{HD57}.
\begin{theorem}[the $(p,q)$-theorem \cite{AK}]\label{Thm:pq}
	For any triple of positive integers $p \geq q \geq d+1$ there exists $c=c(p,q,d)$ such that if $\mathcal{F}$ is a family of compact convex sets in $\mathbb{R}^d$ that satisfies the $(p,q)$-property, then there exists a transversal for $\mathcal{F}$ of size at most $c$.
\end{theorem}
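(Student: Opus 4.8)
The plan is to derive the bounded transversal from the $(p,q)$-property in three stages: (i) turn the $(p,q)$-property into a \emph{fractional Helly}-type condition, (ii) use linear-programming duality to bound the \emph{fractional} transversal number $\tau^*(\mathcal{F})$ by a constant depending only on $p,q,d$, and (iii) round this fractional transversal to an honest finite transversal using the existence of weak $\epsilon$-nets of bounded size. It suffices to treat finite $\mathcal{F}$: for an infinite family one applies the finite case to get a transversal of size $\le c(p,q,d)$ for every finite subfamily, and then a standard compactness argument (for a compact convex $S$ the set of $\le c$-point configurations meeting $S$ is closed, and the conditions for all finite subfamilies are simultaneously satisfiable) produces a single transversal of size $\le c$ for all of $\mathcal{F}$.

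For stage (i), a double-counting argument shows that the $(p,q)$-property forces a constant fraction of the $(d+1)$-subsets of $\mathcal{F}$ to be intersecting. Every $p$-subset of $\mathcal{F}$ contains, by hypothesis, a $q$-subset with a common point, and such a $q$-subset contains $\binom{q}{d+1}$ intersecting $(d+1)$-subsets; counting incidences between intersecting $(d+1)$-subsets $T$ and the $p$-subsets containing them (each $T$ lying in $\binom{n-d-1}{p-d-1}$ of them, $n=|\mathcal{F}|$) yields that at least $\alpha\binom{n}{d+1}$ of the $(d+1)$-subsets intersect, where $\alpha=\binom{q}{d+1}/\binom{p}{d+1}>0$. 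By the fractional Helly theorem of Katchalski and Liu there is then a point of $\mathbb{R}^d$ lying in at least $\beta n$ members of $\mathcal{F}$, with $\beta=\beta(d,\alpha)>0$.

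For stage (ii), I would apply this last conclusion not only to $\mathcal{F}$ but to arbitrary rational weightings of $\mathcal{F}$ — equivalently, to multisets obtained by replacing sets by multiple copies — to conclude that every feasible solution of the dual (fractional matching) LP has value at most $1/\beta$; by LP duality the fractional transversal number satisfies $\tau^*(\mathcal{F})=\nu^*(\mathcal{F})\le 1/\beta=:c_1(p,q,d)$. Concretely, there is a nonnegative weighting $w$ supported on finitely many points of $\mathbb{R}^d$ (one per cell of the arrangement of the boundaries $\partial S$, $S\in\mathcal{F}$) with $\sum_x w(x)\le c_1$ and $\sum_{x\in S}w(x)\ge 1$ for every $S\in\mathcal{F}$. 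Normalizing $w$ to a probability measure $\mu$, every $S\in\mathcal{F}$ has $\mu(S)\ge 1/c_1=\beta$. For stage (iii), the weak $\epsilon$-net theorem for convex sets (Alon--B\'ar\'any--F\"uredi--Kleitman; Alon--Kleitman) applied to $\mu$ with $\epsilon=\beta$ gives a set $N\subset\mathbb{R}^d$ with $|N|\le f(d,\beta)$ meeting every convex set of $\mu$-measure at least $\beta$, hence meeting every $S\in\mathcal{F}$; so $N$ is the required transversal and one may take $c(p,q,d)=f\bigl(d,\beta(d,\binom{q}{d+1}/\binom{p}{d+1})\bigr)$.

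The genuinely deep external input, and what I expect to be the main obstacle, is the existence of weak $\epsilon$-nets for the class of convex sets whose size is bounded purely in terms of $d$ and $\epsilon$ (with no dependence on the underlying point set) — a substantial theorem in its own right, and exactly what turns the cheap iterated-fractional-Helly bound $O(\log|\mathcal{F}|)$ into an absolute constant. Within the argument itself the subtle point is stage (ii): passing from the unweighted, finite fractional Helly theorem to a bound on the dual LP requires care with multiplicities, because a $p$-tuple sampled with repetition from a weighted family need not consist of $q$ \emph{distinct} sets, so the multiplicities (and the fractional Helly parameters) must be chosen so that the $(p,q)$-hypothesis can still be invoked.
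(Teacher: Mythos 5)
This statement is background quoted verbatim from Alon--Kleitman \cite{AK}; the paper contains no proof of it, so the only meaningful comparison is with the original argument. Your three-stage outline --- double counting to get a positive fraction of intersecting $(d+1)$-tuples, the Katchalski--Liu fractional Helly theorem, LP duality (with the multiset/weighting subtlety you correctly flag) to bound the fractional transversal number, and then the bounded-size weak $\epsilon$-net theorem for convex sets to round to a genuine transversal --- is exactly the Alon--Kleitman proof, and it is correct.
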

In the 30 years since the publication of the $(p,q)$-theorem, numerous variations, generalizations and applications of it were obtained (see, e.g.~\cite{ECK03,HW17,KS21,KST17}). We outline below three variations to which our results are closely related. 

\medskip
\noindent \textbf{$(p,q)$-theorems for $k$-transversals.} The question whether Helly's theorem can be generalized to \emph{$k$-transversals} -- namely, to piercing by $k$-dimensional flats (i.e., $k$-dimensional affine subspaces of $\mathbb{R}^d$) -- goes back to Vincensini~\cite{Vin35}, and was studied extensively. Santal\'{o}~\cite{San40} observed that there is no Helly-type theorem for general families of convex sets, even with respect to $1$-transversals in the plane. Subsequently, numerous works showed that Helly-type theorems for $1$-transversals and for $(d-1)$-transversals in $\mathbb{R}^d$ can be obtained under additional assumptions on the sets of the family (see~\cite{HW17} and the references therein). A few of these results were generalized to $k$-transversals for all $1 \leq k \leq d-1$ (see~\cite{ABM07,AGP02}). 

Concerning $(p,q)$-theorems, the situation is cardinally different. In~\cite{AK95}, Alon and Kalai obtained a $(p,q)$-theorem for \emph{hyperplane transversals} (that is, for $(d-1)$-transversals in $\mathbb{R}^d$). The formulation of the theorem involves a natural generalization of the $(p,q)$-property: 

For a family $\mathcal{G}$ of objects (e.g., the family of all hyperplanes in $\mathbb{R}^d$), a family $\mathcal{F}$ is said to satisfy the $(p,q)$-property with respect to $\mathcal{G}$ if among every $p$ members of $\mathcal{F}$, some $q$ can be pierced by an element of $\mathcal{G}$. A set $P \subset \mathcal{G}$ is called a {\em transversal} for $\mathcal{F}$ with respect to $\mathcal{G}$ if every member of $\mathcal{F}$ is pierced by an element of $P$.
\begin{theorem}[\cite{AK95}]\label{Thm:Alon-Kalai}
	For any triple of positive integers $p \geq q \geq d+1$ there exists $c=c(p,q,d)$ such that if $\mathcal{F}$ is a family of compact convex sets in $\mathbb{R}^d$ that satisfies the $(p,q)$-property with respect to piercing by hyperplanes, then there exists a hyperplane transversal for $\mathcal{F}$ of size at most $c$.
\end{theorem}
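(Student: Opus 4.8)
The plan, which is in substance the argument of Alon and Kalai, is to dualize the problem and then feed it into the Alon--Kleitman $(p,q)$-machinery. Identify the space of hyperplanes of $\mathbb{R}^d$ with a compact $d$-manifold $\mathcal{H}$ (real projective $d$-space; for computations one may use the affine chart $a \mapsto \{x : \langle a, x \rangle = 1\}$ and treat the hyperplanes through the origin in a second chart or by a limiting argument). For a compact convex set $K$ let $\widehat{K} \subseteq \mathcal{H}$ be the set of hyperplanes that meet $K$. The key elementary observation is that $\mathcal{H} \setminus \widehat{K}$ is a disjoint union $A_K \sqcup B_K$ of two convex sets — the hyperplanes having $K$ strictly in their positive, resp.\ negative, open halfspace — so every dual range $\widehat{K}$ is the complement of a union of two convex sets. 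For $\mathcal{F} = \{K_i\}$ the dual family $\widehat{\mathcal{F}} = \{\widehat{K_i}\}$ satisfies the ordinary (point-)$(p,q)$-property on $\mathcal{H}$ — $q$ of the $\widehat{K_i}$ share a point exactly when the corresponding $q$ members of $\mathcal{F}$ share a transversal hyperplane — and a finite hyperplane transversal of $\mathcal{F}$ is exactly a finite point-transversal of $\widehat{\mathcal{F}}$. Thus it suffices to prove a $(p,q)$-theorem for families whose members are complements of unions of two convex sets in $\mathbb{R}^d$.

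For this I would verify the two hypotheses of the Alon--Kleitman scheme. The first is a \emph{fractional Helly theorem} for such dual families: there is a bounded number $m = m(d)$ so that for every $\alpha > 0$ some $\beta = \beta(\alpha,d) > 0$ works, i.e.\ whenever $\widehat{K_1},\dots,\widehat{K_n}$ are given and at least $\alpha \binom{n}{m}$ of the $m$-tuples have a common point, some $\beta n$ of them do. Granting this, a standard averaging argument turns the $(p,q)$-property of $\widehat{\mathcal{F}}$ into a positive lower bound on the fraction of intersecting $m$-tuples in every finite subfamily, and then linear-programming duality — exactly as in the proof of Theorem~\ref{Thm:pq} — shows that every finite $\mathcal{F}' \subseteq \mathcal{F}$ admits a fractional hyperplane transversal of value at most $C = C(p,q,d)$, i.e.\ a finitely supported probability measure $\mu$ on $\mathcal{H}$ with $\mu(\widehat{K}) \ge 1/C$ for all $K \in \mathcal{F}'$. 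The second hypothesis is a bounded \emph{weak $\epsilon$-net} for $\widehat{\mathcal{F}}$: every probability measure on $\mathcal{H}$ has, for each $\epsilon > 0$, a set of at most $g(\epsilon,d)$ hyperplanes meeting every $\widehat{K}$ of measure $\ge \epsilon$. Here I would split each ``miss-region'' $A_K \sqcup B_K$ into its two convex sides and apply the weak $\epsilon$-net theorem for convex sets (Alon--B\'ar\'any--F\"uredi--Kleitman) in the dual; the usual obstruction to weak $\epsilon$-nets — a tiny convex range containing the whole net — cannot occur, since a convex body met by an $\epsilon$-fraction of the hyperplanes is not ``too localized.'' Applying this net with $\epsilon = 1/C$ to the measure $\mu$ produces a hyperplane transversal of size $\le g(1/C,d)$ for each finite $\mathcal{F}'$; finally, as $\mathcal{H}^{g(1/C,d)}$ is compact and ``meets $K$'' is a closed condition, a finite-intersection-property argument yields a transversal of the same size for all of $\mathcal{F}$, and we take $c(p,q,d) = g(1/C(p,q,d),d)$.

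The main obstacle is the fractional Helly theorem. It is genuinely needed in fractional form: Santal\'o-type configurations show that the Helly number for hyperplane transversals is unbounded already for $1$-transversals in the plane, so the content lies in extracting a \emph{bounded} fractional Helly number $m(d)$. The natural attack is topological — an intersection of several dual ranges is, after complementation, a union of convex sets, so one wants to bound its homological complexity and invoke a topological fractional Helly theorem — and the delicate point is keeping the resulting constants independent of how many ranges are intersected, since the number of convex pieces produced by complementation grows with that count. The compactification bookkeeping of the first paragraph and the weak $\epsilon$-net construction are, by comparison, routine.
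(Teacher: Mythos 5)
First, note that this statement is not proved in the paper at all: it is quoted as background from Alon--Kalai~\cite{AK95}, so the only meaningful comparison is with the original argument. Your outline does reproduce the correct high-level architecture of that argument -- dualize hyperplanes to points, observe that the set of hyperplanes missing a compact convex $K$ splits into two convex pieces, and then run the Alon--Kleitman scheme (fractional Helly $\Rightarrow$ bounded fractional transversal via LP duality, then a weak $\epsilon$-net in the dual, then compactness). But as a proof it has two genuine gaps, and they are exactly the two places where the real content of~\cite{AK95} lies.

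The first gap you name yourself: the fractional Helly theorem for hyperplane transversals is assumed, not proved. This is not a technical lemma one may defer -- it is the heart of the theorem, and the ``natural attack'' you sketch is precisely the one that does not obviously work: the intersection of $m$ dual ranges $\widehat{K_1}\cap\dots\cap\widehat{K_m}$ is the complement of a union of up to $2m$ convex sets, so its topological complexity grows with $m$, and no off-the-shelf topological fractional Helly theorem applies with a Helly number independent of $m$. Alon and Kalai establish this fractional Helly statement by a separate, specific argument; without it your write-up is a reduction of Theorem~\ref{Thm:Alon-Kalai} to an unproven statement. The second gap is the weak $\epsilon$-net step, which you dismiss as routine but which is not. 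What is needed is a set of at most $g(\epsilon,d)$ hyperplanes containing a point of every \emph{range} $\widehat{K}$ of measure at least $\epsilon$; applying the Alon--B\'ar\'any--F\"uredi--Kleitman theorem to the convex miss-regions $A_K,B_K$ aims at the wrong target (it would produce points piercing heavy \emph{convex} sets, not hyperplanes meeting every $K$ whose non-convex dual range is heavy), and the parenthetical ``not too localized'' remark is not an argument. There is also no shortcut through ordinary $\epsilon$-nets: the family $\{\widehat{K}: K \mbox{ convex}\}$ has unbounded VC dimension (e.g., hyperplanes tangent to a sphere can be shattered by hulls of slightly inflated tangency points), so a dedicated weak-net argument for this dual range space -- again one of the actual contributions of~\cite{AK95} -- is required. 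Until both ingredients are supplied, the proposal is an accurate roadmap rather than a proof.
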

As an open problem at the end of their paper, Alon and Kalai~\cite{AK95} asked whether a similar result can be obtained for $k$-transversals, for $1 \leq k \leq d-2$. The question was answered on the negative by Alon, Kalai, Matou\v{s}ek and Meshulam~\cite{AKMM02}, who showed by an explicit example that no such $(p,q)$-theorem exists for line transversals in $\mathbb{R}^3$. Their example can be leveraged by a lifting technique due to Bukh, presented in~\cite{CGH22}, to show that the assertion does not hold for piercing by $k$-flats, for any $1 \leq k \leq d-2$.

\medskip
\noindent \textbf{$(p,q)$-theorems without convexity.} Numerous works obtained variants of the $(p,q)$-theorem in which the convexity assumption on the sets is replaced by a different (usually, topological) assumption. Most of the results in this direction base upon a result of Alon et al.~\cite{AKMM02}, who showed that a $(p,q)$-theorem can be obtained whenever a \emph{fractional Helly theorem} can be obtained, even without a convexity assumption on the elements of $\mathcal{F}$. In particular, the authors of~\cite{AKMM02} obtained a $(p,q)$-theorem for finite families of sets which are a {\em good cover}, meaning that the intersection of every subfamily is either empty or contractible. Matou\v{s}ek~\cite{MAT04} showed that bounded VC-dimension implies a $(p,q)$-theorem, and Pinchasi~\cite{Pinchasi15} proved a $(p,q)$-theorem for geometric hypergraphs whose ground set has a small union complexity. 
Recently, several more general  $(p,q)$-theorems were obtained for families with a bounded Radon number, by Moran and Yehudayoff~\cite{MY20}, Holmsen and Lee~\cite{HL21}, and Pat\'{a}kov\'{a}~\cite{Patakova20}.

\medskip
\noindent \textbf{$(p,q)$-theorems for infinite set families.} While most of the works on $(p,q)$-theorems concentrated on finite families of sets, several papers studied $(p,q)$-theorems for infinite set families.

It is well-known that Helly's theorem for infinite families holds under the weaker assumption that all sets are convex and closed, and at least one of them is bounded. In 1990, Erd\H{o}s asked whether a $(4,3)$-theorem can be obtained in this weaker setting as well. His conjecture -- which was first published in~\cite{BS91} -- was that a $(4,3)$-theorem holds for infinite families of convex closed sets in the plane in which at least one of the sets is bounded.

Following Erd\H{o}s and Gr\"{u}nbaum, who refuted Erd\H{o}s' conjecture and replaced it by a weaker conjecture of his own, several papers studied versions of the $(p,q)$-theorem for infinite families (see~\cite{MMRS15,Muller13}). These papers aimed at replacing the compactness assumption (which can be removed completely for finite families) by a weaker assumption. 

\subsection{Our contributions}

In this paper we study variants of the $(p,q)$-theorem for infinite families $\mathcal{F}$ of sets in $\mathbb{R}^d$. Our basic question is whether the assumption of the theorem can be replaced by the following weaker infinitary assumption, which we naturally call \emph{an $(\aleph_0,q)$-property}: Among every $\aleph_0$ elements of $\mathcal{F}$, there exist some $q$ that can be pierced by a single point (or, more generally, by an element of some family $\mathcal{G}$). We show that despite the apparently weaker condition, $(\aleph_0,q)$-theorems can be obtained in several settings of interest.

The families $\F$ for which we prove our $(\aleph_0,q)$-theorem, consist of not necessarily convex `generalized closed balls' (with no restriction on the radii), called \emph{near-balls},
which we now define. The radius of a ball $D$ is denoted by $r(D)$. For every compact set $B \subset \Re^d$, we denote by $\mathrm{inscr}(B)$ a largest closed ball that is inscribed in $B$, and denote the center of $\mathrm{inscr}(B)$ by $x_B$. We denote by $\mathrm{escribed}(B)$ the smallest closed ball centered at $x_B$ that contains $B$. If there are several balls inscribed in $B$ whose radius is the maximum, we choose $\mathrm{inscr}(B)$ such that $r(\mathrm{escribed}(B))$ is minimal. If there are still several candidates, we choose arbitrarily one of them.

\begin{definition}\label{def:near-balls}
	A family $\F$ of non-empty sets in $\Re^d$ is called \emph{a family of near-balls} if there exists a universal constant $K=K(\F)$ s.t.~for every $B \in \F$, we have:
	\begin{align*}\label{Eq:Near-Balls}
		\begin{split}
			\mbox{\textbf{(1)} } &r(\mathrm{escribed}(B)) \leq K \cdot r(\mathrm{inscr}(B)), \\
			\mbox{\textbf{(2)} } &r(\mathrm{escribed}(B)) \leq K+r(\mathrm{inscr}(B)).
		\end{split}
	\end{align*}
\end{definition}

\noindent Note that any family of near-balls, is in particular, a family of well-rounded sets, as were defined in \cite{Bar21}. For such families in $\Re^3$, it was proved in \cite{Bar21} that the $(2,2)$-property implies an $32K^2$-sized line transversal.
Note also that every family of closed balls, is a family of near-balls.

Our main result concerns $(p,q)$-theorems with respect to $k$-transversals. In this setting, the construction presented in~\cite[Sec.~9]{AKMM02} suggests that no $(\aleph_0,k+2)$-theorem with respect to $k$-transversals can be obtained for general families of convex sets in $\mathbb{R}^d$ where $k<d-1$, since even the stronger $(d+1,d+1)$-property does not imply a bounded-sized $k$-transversal. However, we show that if the convexity assumption is replaced by an assumption that the elements of the family are compact near-balls, then an $(\aleph_0,k+2)$-theorem can be obtained.

\begin{theorem}\label{Thm:Main1}
	For any $d$ and any $0 \leq k \leq d-1$, if an infinite family $\F$ of compact near-balls in $\Re^d$ satisfies the $(\aleph_0,k+2)$-property w.r.t. $k$-flats, then it can be pierced by finitely many $k$-flats.
\end{theorem}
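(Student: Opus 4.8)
The plan is to argue by contradiction: suppose $\F$ cannot be pierced by finitely many $k$-flats, yet satisfies the $(\aleph_0,k+2)$-property. The strategy is to build, step by step, an infinite subfamily $\F' = \{B_1, B_2, \dots\} \subseteq \F$ no $k+2$ of whose members admit a common $k$-transversal, contradicting the $(\aleph_0,k+2)$-property applied to $\F'$. To do this we need two ingredients: a \emph{local} finiteness statement (families of near-balls of bounded size that are "spread out" far apart have no common $k$-transversal) and a \emph{compactness/scaling} dichotomy that lets us always find the next ball far from the piercing configurations already under consideration.

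First I would reduce to balls. Using Definition \ref{def:near-balls}, replace each $B \in \F$ by $\mathrm{inscr}(B)$ and by $\mathrm{escribed}(B)$; conditions (1) and (2) guarantee that a $k$-flat meeting $B$ meets $\mathrm{escribed}(B)$, while controlling the two radii up to the constant $K$. This lets me work with the centers $x_B$ and radii, and reduces the $(\aleph_0,k+2)$-property for $\F$ to a comparable statement for associated balls (losing only constants). Next I would dispose of the ``small/bounded'' case: if infinitely many members of $\F$ have $r(\mathrm{inscr}(B))$ bounded below and $r(\mathrm{escribed}(B))$ bounded above \emph{and} their centers lie in a bounded region, then all these sets lie in a compact box and are "fat" — here one invokes the finite theory, i.e.\ the Alon--Kalai $(p,q)$-theorem (Theorem \ref{Thm:Alon-Kalai}) for the case $k=d-1$ together with a direct argument / known $k$-transversal Helly-type results for fat sets in a bounded region for general $k$, to get that the $(d+1,d+1)$-property (hence a fortiori the $(\aleph_0,k+2)$-property restricted to this subfamily) would already force a bounded-size $k$-transversal. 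So the interesting case is when, after passing to a subfamily, either the centers escape to infinity or the radii go to $0$ or to $\infty$.

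In that escaping regime I would construct $\F'$ greedily. Having chosen $B_1,\dots,B_m$ so that no $k+2$ of them have a common $k$-transversal, I want $B_{m+1}$ so that the same holds for $B_1,\dots,B_{m+1}$: it suffices that $B_{m+1}$ has no common $k$-transversal with any $k+1$ of $B_1,\dots,B_m$. The key geometric lemma here says: given a $k$-flat $L$ and $k+1$ near-balls meeting $L$ (of radii/centers within some bounded window $W$), any further near-ball that is ``too far'' from $W$ — meaning its inscribed ball has center at distance $> \rho(W)$ from the span of $W$, or radius ratio outside $[1/\rho(W),\rho(W)]$ — cannot lie on a $k$-flat through those $k+1$ balls, because a $k$-flat pinned down to within bounded error by $k+1$ fat, bounded-window sets is confined to a bounded neighbourhood and cannot reach the far-away ball. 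Since $\F$ is not finitely $k$-pierceable and we are in the escaping regime, infinitely many members of $\F$ satisfy this ``too far'' condition relative to the finite data $B_1,\dots,B_m$, so a valid $B_{m+1}$ exists; iterating produces $\F'$ and the contradiction.

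The main obstacle I anticipate is the geometric lemma bounding the locus of $k$-flats that are common transversals to $k+1$ near-balls living in a bounded window. For $k=d-1$ (hyperplanes) this is essentially the compactness of the relevant Grassmannian slice plus fatness, but for general $k < d-1$ a $k$-flat has many ``directions'' in which it can stretch away to infinity while still meeting $k+1$ fixed fat sets, so one must measure distance not from the window itself but from the affine hull of the contact points and argue that those contact points are confined; making this quantitative (the function $\rho(W)$, and checking it behaves well as the window degenerates — radii $\to 0$ or $\to\infty$, centers $\to\infty$) is where the real work lies, and it is exactly the place where the near-ball hypotheses (both the multiplicative bound (1) and the additive bound (2)) are used to prevent pathological thin or unboundedly eccentric behaviour. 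A secondary technical point is organizing the case analysis (which of ``centers escape'', ``radii $\to 0$'', ``radii $\to\infty$'' occurs) uniformly via a single notion of a configuration ``escaping'' a compact parameter set, so that the greedy construction goes through without splitting into many sub-cases.
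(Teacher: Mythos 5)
Your overall plan---assume $\F$ is not finitely pierceable and greedily build an infinite $k$-independent subfamily, contradicting the $(\aleph_0,k+2)$-property---is exactly the dichotomy the paper proves. But the heart of your construction rests on a ``key geometric lemma'' that is false as stated for $k\ge 1$: the union of all $k$-flats meeting $k+1$ fixed fat sets in a bounded window is \emph{unbounded} (every such flat runs off to infinity in its own directions), so no criterion of the form ``the next ball is far from the window'' or ``far from the affine hull of the contact points'' can rule out that some transversal flat of the already-chosen sets reaches it. Concretely, in $\Re^3$ with $k=1$, two unit balls near the origin admit common transversal lines sweeping an unbounded region, and a third ball placed far away roughly along the line of centers is pierced by one of them. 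What is needed is not a distance bound but \emph{directional} control: one must guarantee that the direction in which the new balls escape is bounded away (by a fixed $\epsilon$-cone) from the directions of all $k$-flats meeting any $(k+1)$-tuple already chosen. The paper manufactures exactly this by induction on $(k,d)$: it locates a ``strong'' accumulation point $x_0$ in the compactification (Claim \ref{cl:comp}), projects the escaping subfamily to $\Re^{d-1}$ (orthogonally if $x_0$ is at infinity, centrally from the origin if $x_0$ is finite, with Claim \ref{cl:KtoK} preserving the near-ball constant), applies the $(k-1,d-1)$ case to extract a subfamily whose projections are $(k-1)$-independent, and only then can it conclude that no transversal $k$-flat of $k+1$ chosen sets is parallel to the escape direction (resp.\ passes through the origin), whence compactness gives the forbidden cone (Observation \ref{obs:cone}, Claims \ref{cl:cone}, \ref{cl:wide-cone}) that makes the greedy step legitimate. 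Your proposal contains no substitute for this dimension-reduction mechanism, and without it the greedy construction breaks at the very first nontrivial step.

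A second, logical, gap is your treatment of the bounded ``fat'' case by appealing to Theorem \ref{Thm:Alon-Kalai} and finite $(p,q)$-theory. The hypothesis $(\aleph_0,k+2)$ does \emph{not} imply any finite $(p,q)$-property: a family may contain arbitrarily large finite $k$-independent subfamilies and still satisfy $(\aleph_0,k+2)$ (this is precisely why the theorem must produce an \emph{infinite} independent subfamily, and why the statement is not a formal consequence of the finite theorems); so ``the $(d+1,d+1)$-property, hence a fortiori the $(\aleph_0,k+2)$-property'' has the implication backwards and there is no finite $(p,q)$-hypothesis available to feed into Alon--Kalai. Moreover, for $1\le k\le d-2$ no $(p,q)$-theorem for $k$-transversals of general convex sets exists (the AKMM counterexample cited in the introduction), so the ``known $k$-transversal results'' you invoke would themselves have to be established for near-balls. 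The paper avoids this entirely: accumulation inside a bounded region is handled by the weak/strong point dichotomy on the compactification together with the compactness principle of Proposition \ref{Prop:Compactness}, with the case $k=0$ proved directly as the induction base.
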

\bigskip

Note that unlike the standard $(p,q)$-theorems, already in the case $k=0$, there is no universal constant $c=c(d)$ such that every family of compact near-balls in $\mathbb{R}^d$ without $\aleph_0$ pairwise disjoint members, can be pierced by at most $c$ points. Indeed, for any $m \in \mathbb{N}$, if the family consists of $\aleph_0$ copies of $m$ pairwise disjoint closed balls, then it satisfies the $(\aleph_0,2)$-property (and actually, even the much stronger $(\aleph_0,\aleph_0)$-property), yet it clearly cannot be pierced by fewer than $m$ points.

Theorem~\ref{Thm:Main1} is sharp, in the sense that both conditions of Definition~\ref{def:near-balls} are needed. To see that the first condition is needed, let $\F=\{e_n\}_{n=1}^{\infty}$ be a family of pairwise intersecting segments inside the unit disc in $\Re^2$ such that no three of them have a point in common. Clearly, $\F$ satisfies the second condition of Definition~\ref{def:near-balls} and the $(\aleph_0,2)$-property w.r.t. points, and yet, it cannot be pierced by a finite number of points.

To see that the less standard second condition is needed, consider the family $\F'=\{B((2^n,0),\frac{2^n}{4}) \cup e_n\}_{n=1}^{\infty}$, where $\{e_n\}$ are the elements of the family $\F$ defined above. (Namely, each set in $\F'$ is the union of a segment near the origin, and a large disc which lies `far' from the origin.) The family $\F'$ satisfies the first condition of  Definition~\ref{def:near-balls} and the $(\aleph_0,2)$-property w.r.t. points, and yet, it cannot be pierced by a finite number of points. This example can be easily modified to make the elements of $\F$ connected. It will be interesting to find out whether such an example can be obtained also under a convexity assumption.

\bigskip

 Theorem \ref{Thm:Main1} requires a significant assumption -- namely, that the elements of the family are \textbf{compact} near-balls. In the case $k=0$ this is clear that the compactness assumption is neccessary. (Consider, for example, the family $ \{  (0,\frac{1}{n}) \}_{n=1,2,\ldots}  \subset \Re  $).
We show by an explicit construction that the compactness assumption is essential also in the case $k=1$, even for planar discs.
\begin{proposition}\label{thm:ex}
	There exists an infinite family $\mathcal{F}$ of open discs in the plane that satisfies the $(3,3)$-property (and so, also the $(\aleph_0,3)$-property) with respect to $1$-transversals (i.e., piercing by lines), but cannot be pierced by a finite number of lines.
\end{proposition}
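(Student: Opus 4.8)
The plan is to produce an explicit family of open discs that are all tangent, \emph{from above}, to one fixed line $\ell_0$, with the points of tangency marching off to infinity and the radii shrinking to $0$. Tangency is the whole point of the construction: for a \emph{closed} disc a tangent line is a transversal, but for an \emph{open} disc it is not, so $\ell_0$ --- the only plausible candidate for a single universal transversal --- will fail to pierce any member of the family.

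Concretely, I would take $\ell_0$ to be the $x$-axis, and for $n \in \N$ let $D_n$ be the open disc in $\Re^2$ of radius $1/n$ centered at $(n,1/n)$, so that $D_n$ lies in the open upper half-plane and is tangent to $\ell_0$ at the point $(n,0)$; put $\F = \{D_n : n \in \N\}$. The first step is to verify a property \emph{stronger} than the $(3,3)$-property, namely that every finite subfamily of $\F$ has a single line as a transversal. Indeed, if $D_{n_1},\dots,D_{n_t}\in\F$ and $\rho := \min_i (1/n_i) > 0$, then the horizontal line $\{y=\rho/2\}$ meets each $D_{n_i}$: since $0 < \rho/2 < 1/n_i$, the distance from the center $(n_i,1/n_i)$ to this line equals $1/n_i - \rho/2$, which is strictly smaller than the radius $1/n_i$. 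In particular every three members of $\F$ have a common line transversal, so $\F$ satisfies the $(3,3)$-property with respect to line transversals (hence also the $(\aleph_0,3)$-property).

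The second step is to show that no finite set of lines pierces $\F$; I would do this by proving the stronger statement that every single line misses all but finitely many members of $\F$. If a line $m$ meets $D_n$, then $\mathrm{dist}((n,1/n),m) < 1/n$, and since $(n,1/n)$ lies at distance $1/n$ from $(n,0)$, the triangle inequality gives $\mathrm{dist}((n,0),m) < 2/n$. Now distinguish three cases for $m$. If $m=\ell_0$, then $m$ is tangent to every $D_n$ and meets none of them. If $m$ is horizontal but $m\ne\ell_0$, say $m=\{y=h\}$ with $h\ne 0$, then $\mathrm{dist}((n,0),m)=|h|\ge 2/n$ once $n\ge 2/|h|$, so $m$ meets only finitely many $D_n$. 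Finally, if $m$ is not horizontal, then it crosses $\ell_0$ in a single point and $\mathrm{dist}((n,0),m)\to\infty$ as $n\to\infty$, so again $m$ meets only finitely many $D_n$. Consequently any finite set of lines meets only finitely many members of $\F$, while $\F$ is infinite, so $\F$ admits no finite line transversal, which completes the argument.

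The only real difficulty here is conceptual rather than technical: one must hit on the idea of forcing the natural transversal to be \emph{tangent} to the discs rather than secant, after which both requirements reduce to one-line distance estimates and no genuine obstacle remains. I would also note that the example is tight with respect to Theorem~\ref{Thm:Main1}: replacing each $D_n$ by its closure turns $\ell_0$ into a single-line transversal of the whole family, in accordance with the fact that compact near-balls do obey the $(\aleph_0,k+2)$-theorem.
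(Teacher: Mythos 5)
Your construction is exactly the one in the paper (open discs of radius $1/n$ centered at $(n,1/n)$, tangent to the $x$-axis), and both verifications are correct: the paper rules out finite line transversals by the same case analysis on lines, and your common horizontal line $\{y=\rho/2\}$ for finite subfamilies is precisely the strengthening the paper records in its closing remark (its main text instead uses the wedge of inner tangents of the two leftmost discs, a minor variation). So the proposal is correct and essentially coincides with the paper's proof.
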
 
Note that such a strong example could not be obtained for families of closed discs in the plane, since, by Theorem~\ref{Thm:Alon-Kalai}, a family of compact convex sets in the plane that satisfies the $(3,3)$-property with respect to piercing by lines, admits a bounded-sized line transversal.

\medskip A simple consequence of Theorem~\ref{Thm:Main1} is the following cute statement:
\begin{proposition}\label{Prop:Simple}
	Let $\F$ be an infinite family of balls in $\mathbb{R}^d$. If for any $p \in \mathbb{N}$, $\F$ contains $p$ pairwise disjoint balls, then $\F$ contains an infinite sequence of pairwise disjoint balls.
\end{proposition} 
This statement can be proved without using the full strength of Theorem~\ref{Thm:Main1}, and yet, is not completely trivial. It should be noted that Proposition~\ref{Prop:Simple} does not hold for families of general convex sets. We leave the riddle of finding a counterexample to the reader.

\subsection{Follow-up work}

After the initial version of this paper, which contained a proof of Theorem~\ref{Thm:Main1} only in the special case of families of $(r,R)$-fat balls, had appeared at the SoCG'22 conference~\cite{KP22}, several works followed up on its results.

Ghosh and Nandi~\cite{GN22} obtained several generalizations of the results of~\cite{KP22}. First, they obtained \emph{colorful} versions of the results of~\cite{KP22}, in the spirit of B\'{a}r\'{a}ny's colorful Helly theorem~\cite{Bar82}. The generalization to a colorful version is more challenging than usual, as for infinite families, it is not even clear what the natural notion of `colorful' is, and indeed, two different notions are studied in~\cite{GN22}. Second, the authors of~\cite{GN22} obtained a generalizarion of the results of~\cite{KP22} to families of bounded convex sets in $\mathbb{R}^2$ (for $k=1$) and of bounded convex balls in $\mathbb{R}^d$ (for all $k<d$), under the additional assumption that the family is a so-called \emph{$k$-growing sequence} (see~\cite[Def.~16, Thms.~17,18]{GN22}). The latter result is a special case of our Theorem~\ref{Thm:Main1}, which obtains the same conclusion for families of balls with no restriction on the diameter and with no `growing sequence' assumption. 

Chakraborty, Ghosh, and Nandi~\cite{CGN23} gave necessary and sufficient conditions for an infinite collection of axis-parallel boxes in
$\mathbb{R}^d$ to be pierceable by finitely many axis-parallel $k$-flats, where $0 \le k < d$. They also discussed colorful versions of their result. 

In a very recent work, Jung and P{\'a}lv{\"o}lgyi~\cite{JP23} suggested an alternative approach toward obtaining $(\aleph_0,q)$-theorems: Instead of proving the infinitary statement directly, one may try to show that the $(\aleph_0,q)$-property implies the existence of a $(p,q)$-property for a sufficiently large $p$, and then deduce the $(\aleph_0,q)$-theorem from a corresponding $(p,q)$-theorem. They proved~\cite[Lem.~17]{JP23} such a reduction statement for families which satisfy a \emph{fractional Helly theorem}, by a complex argument which uses and further develops techniques introduced by Holmsen~\cite{Hol20}. Then, they proved~\cite[Thm.~10]{JP23} a fractional Helly theorem with respect to piercing by $k$-flats for families of so-called \emph{convex $\rho$-fat sets}, which include convex near-balls. Furthermore, they deduced a $(p,k+2)$-theorem w.r.t.~piercing by $k$-flats for convex $\rho$-fat sets (\cite[Thm.~2]{JP23}). This allowed Jung and P{\'a}lv{\"o}lgyi to apply the reduction strategy to prove an  $(\aleph_0,k+2)$-theorem w.r.t.~piercing by $k$-flats for families of convex $\rho$-fat sets, thus providing an alternative proof for Theorem~\ref{Thm:Main1} in this setting (see~\cite{JP23}, Thm.~9 and the paragraphs after it). 

It should be mentioned however that since the fractional Helly theorem of~\cite{JP23} relies crucially on the convexity assumption, the reduction approach of~\cite{JP23} implies the assertion of Theorem~\ref{Thm:Main1} only for families of \emph{convex} near-balls. On the contrary, our approach which tackles the $(\aleph_0,k+2)$-problem directly using geometric tools, allows proving Theorem~\ref{Thm:Main1} for families of general (i.e., not necessarily convex) near-balls. 


\bigskip

\noindent \textbf{Organization of the paper.}
The paper is organized as follows. Definitions and notations that will be used throughout the paper are presented in Section~\ref{sec:def}, and the proof of Theorem~\ref{Thm:Main1} is presented in Section~\ref{sec:main}. The Proof of Proposition~\ref{thm:ex} ispresented in Section \ref{sec:anti}.

\section{Definitions, Notations, and Basic Observations} \label{sec:def}


We use the following classical definitions and notations.
\begin{itemize}
	\item For $0 \leq k \leq d-1$, a $k$-flat in $\mathbb{R}^d$ is a $k$-dimensional affine subspace of $\mathbb{R}^d$ (namely, a translate of a $k$-dimensional linear subspace of $\mathbb{R}^d$). In particular, a $0$-flat is a point, an $1$-flat is a line, and a $(d-1)$-flat is a hyperplane. Note that the translate through the origin of a $k$-flat $J$, is just the difference $J-J$.
	
	\item The \emph{direction} of a $k$-flat ($k>0$) in $\mathbb{R}^d$ is defined as follows. First, the $k$-flat is translated such that it will pass through the origin. Then, its direction is defined as the great $(k-1)$-sphere in which the $k$-flat intersects the sphere $\mathcal{S}^{d-1}$, namely, $(J-J) \cap \mathcal{S}^{d-1}$. (This definition follows~\cite{AGP02}.)
	
	\item A $k_1$-flat and a $k_2$-flat are called \emph{parallel} if the direction of one of them is included in the direction of the other. (Equivalently, this means that if both are translated to pass through the origin, then one translate will include the other. Note that this relation is not transitive, and that two flats of the same dimension are parallel, if and only if one of them is a translate of the other.)
	
	
	\item A family $\F=\{B_{\alpha}  \}_{\alpha}$ of sets in $\Re^d$, is \emph{independent w.r.t. k-flats} (or \emph{k-independent}) if no $k$-flat $\pi \subset \Re^d$ intersects $k+2$ $B_{\alpha}$'s or more, and if $|\F|=n<k+2$, no $(n-2)$-flat meets all members of $\F$. (That is, every selection of $j$ points from $j$ members of $\F$, $1 \leq j \leq k+1$, is an affinely independent set.)
	
	\item The distance between a point $x \in \Re^d$ and a compact set $B \subset \Re^d$ is the minimal distance between $x$ and some point $y \in B$, and is denoted by $dist(B,x)$.
\end{itemize}

In this paper we consider the compactification $\C$ of $\Re^d$, in which we add an ``point at infinity'' to all the rays in the same direction. This means that every line in $\C$ is incident to two points at infinity (and not only to one point at infinity as in the projective space). This compactification $\C$ is homeomorphic to to the closed ball $B(0,\pi/2) \subset \Re^d$. Indeed, we can define a homeomorphism $h$ that transfers all the finite points in $\C$ (which are actually points of $\Re^d$) into the open ball with radius $\pi / 2$ centered at the origin, by defining 
	\[
0_{\Re^d} \mapsto 0_{\Re^d}
\] 
	\[
0\neq x \mapsto \frac{x}{||x||}arctg(||x||),
\] 
and transfer continuously the point at infinity in $\C$ into the boundary of this ball. (Each ray in $\Re^d$ that emanates from the origin is transferred to an initial segment of itself.) 

For every finite point $p \in \C$ (which is a point in $\Re^d$), an (open) neighborhood of $p$ is $B^{\circ}(p,\epsilon) \subset \Re^d$, where $B^{\circ}(p,\epsilon)$ is the open ball of radius $\epsilon$ centered at $p$. 
For the sake of convenience, we define the neighborhoods of a point at infinity in $\C$, only with respect to the finite part of $\C$: Let $p \in \Re^d$ with $||p||=1$. Define a \emph{(finite)-$\epsilon$-neighborhood} of the point at infinity on the ray $\overrightarrow{ 0p}$, to be
	\[
  \{  x \in \Re^d \subset \C: |   \frac{x}{||x||}   -  p   |<\epsilon   \land ||x||>\frac{1}{\epsilon}   \}.
\] 
This (somewhat non-standard) definition takes into account only the finite points in the neighborhood of the point at infinity, which is more reasonable in our context, in which all the elements of $\F$ are in the finite part of $\C$. 
Under these two definitions of neighborhoods, for any sequence $\{x_n\}_{n=1}^{\infty}$ of finite points in $\C$, the limit $\lim_{n \to \infty}x_n$, if it exists, is defined in the standard topological definition, and this limit is a point in $\C$ (that can be either a finite point or a point at infinity).

\bigskip
The following proposition, which is a general application of compactness, is crucial in the sequel.

\begin{proposition}\label{Prop:Compactness}
	Let $\F$ be a family of compact sets in $\mathbb{R}^d$, let $0 \leq k \leq d-1$ and $m \in \mathbb{N}$. If any finite subfamily of $\F$ can be pierced by at most $m$ $k$-flats, then $\F$ can be pierced by at most $m$ $k$-flats.
\end{proposition}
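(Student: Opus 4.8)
The plan is to use a standard compactness argument, packaging the space of $k$-flats as a compact space and expressing the ``pierced by $m$ $k$-flats'' condition as a closed condition, so that the finite intersection property does the rest. First I would fix a suitable compact parameter space $\Pi$ for $k$-flats in $\mathbb{R}^d$: a $k$-flat is determined by a $k$-dimensional direction (a point of the Grassmannian $G(d,k)$, which is compact) together with a point in the orthogonal complement through which it passes; since all members of $\F$ are compact, and in particular bounded, any finite subfamily that can be pierced by $m$ $k$-flats can be pierced by $m$ $k$-flats each of which meets a fixed large ball $B(0,R_S)$ depending only on that finite subfamily $S$ — but to get a single compact parameter space we should instead argue inside the compactification $\C$ introduced above, or equivalently observe that for \emph{each fixed} $B \in \F$ the set of $k$-flats meeting $B$ is a compact subset of $\Pi$. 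I would take $\Pi^m$ with the product topology; this is compact.

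Next, for each $B \in \F$ define
\[
Z_B = \{(\pi_1,\ldots,\pi_m) \in \Pi^m : \pi_i \cap B \neq \emptyset \text{ for some } i\}.
\]
The key step is to show each $Z_B$ is closed in $\Pi^m$. This follows because $B$ is compact: if $(\pi_1^{(n)},\ldots,\pi_m^{(n)}) \to (\pi_1,\ldots,\pi_m)$ with, say, $\pi_{i_n}^{(n)} \cap B \neq \emptyset$ along a subsequence with constant index $i_n = i$, pick $y_n \in \pi_i^{(n)} \cap B$; by compactness of $B$ a subsequence of $y_n$ converges to some $y \in B$, and since the $k$-flats $\pi_i^{(n)}$ converge to $\pi_i$ in $\Pi$ (convergence of direction and of base point), the limit $y$ lies in $\pi_i$, so $(\pi_1,\ldots,\pi_m) \in Z_B$. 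The hypothesis that every finite subfamily can be pierced by $\leq m$ $k$-flats says exactly that for every finite $S \subseteq \F$, the intersection $\bigcap_{B \in S} Z_B$ is non-empty. Since $\Pi^m$ is compact and each $Z_B$ is a closed subset with the finite intersection property, $\bigcap_{B \in \F} Z_B \neq \emptyset$; any element of this intersection is a choice of $m$ $k$-flats that pierce all of $\F$.

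The one point requiring care — and the main obstacle — is the compactness of the parameter space together with the closedness of $Z_B$: a priori the $k$-flats witnessing piercing of different finite subfamilies could ``escape to infinity,'' so one must either confine attention, for a fixed $B$, to the compact set of $k$-flats meeting $B$ (which suffices, since any piercing flat that meets no $B\in S$ can be discarded, shrinking the count), or work in the compactification $\C$ where the relevant set of flats is genuinely compact and ``limit flats'' through points at infinity cause no trouble because every $B \in \F$ lies in the finite part. Either way the argument is the routine one; I would present it via the confinement trick, noting that without loss of generality each of the $m$ piercing flats for a finite $S$ meets at least one member of $S$, hence lies in the compact set $\bigcup_{B\in S}\{\pi : \pi\cap B\neq\emptyset\}$, and then invoking Tychonoff/finite-intersection as above.
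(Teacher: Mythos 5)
Your skeleton --- parametrize $m$-tuples of $k$-flats, observe that for each $B\in\F$ the set $Z_B$ of tuples having a flat that meets $B$ is closed, and finish with the finite intersection property --- is the same as the paper's. But the whole difficulty of the proposition is the non-compactness of the space of $k$-flats, and your handling of it has a genuine gap. First, the assertion that ``$\Pi^m$ with the product topology is compact'' is false: the space of all $k$-flats in $\mathbb{R}^d$ is not compact, because the translational part is unbounded (you effectively concede this when you later call compactness of the parameter space ``the main obstacle''). Second, the resolution you commit to --- for a finite $S\subseteq\F$ one may assume each of the $m$ piercing flats meets some member of $S$, hence lies in the compact set $\bigcup_{B\in S}\{\pi:\pi\cap B\neq\emptyset\}$ --- does not repair this: that compact set depends on $S$ and grows with $S$, so it does not supply a single compact ambient space containing the sets $Z_B$, and no individual $Z_B$ is compact either (the $m-1$ non-witnessing coordinates are completely unconstrained). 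The finite intersection property gives $\bigcap_{B\in\F}Z_B\neq\emptyset$ only when the ambient space is compact, or at least one member of the family is; neither holds here, and a priori the witnessing tuples for larger and larger $S$ can escape to infinity.

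The paper supplies exactly the missing device: w.l.o.g.\ (in effect, taking $m$ minimal) there is a \emph{finite} subfamily $\F_0\subset\F$ that cannot be pierced by $m-1$ $k$-flats, and one replaces your $Z_B$ by the set $\Pi_B$ of $m$-tuples piercing $\F_0\cup\{B\}$. In such a tuple every one of the $m$ flats must meet some member of $\F_0$ (otherwise the remaining $m-1$ flats would pierce $\F_0$), so all $m$ flats meet a fixed ball containing $\bigcup\F_0$, each $\Pi_B$ is compact, and the finite intersection property applies legitimately. Your alternative suggestion --- work in a compactification of the flat space, using the fact that for a \emph{fixed} compact $B$ the set of flats meeting $B$ is compact, so the coordinate witnessing $B$ cannot degenerate to infinity in the limit (flats lost at infinity are simply discarded, leaving at most $m$ genuine flats that still pierce every $B$) --- can indeed be completed into a correct proof, but in your write-up it is only gestured at, while the route you actually present does not work as stated.
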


\begin{proof}[Proof of Proposition~\ref{Prop:Compactness}] 
	Any $k$-flat $\pi \subset \Re^d$ can be represented as 
	$$  \pi =\{  c+\lambda_1 v_1+\ldots+  \lambda_k v_k      : \lambda_i \in \Re      \},   $$ 
	where $c$ is the point on $\pi$ closest to the origin, and $\{v_1, \ldots,v_k\}$ is an orthonormal basis of the vector subspace $\pi-\pi = \{x-y: x,y \in \pi\}$ which is parallel to $\pi$. (This actually means that the vector $\vec{0c}$ is orthogonal to each $v_i$.)
	
	Assign to each $k$-flat $\pi$ all the corresponding $(k+1)$-tuples of the type $\{  c,v_1 ,\ldots, v_k\} $ that represent $\pi$. Note that while $c$ is uniquely determined by $\pi$, the orthonormal basis is not. We obtain a representation of all $k$-flats in $\Re^d$ as $(k+1)$-tuples of $d$-vectors
	$$  \mathcal{A}  = \{   (c,v_1 ,\ldots, v_k)  : c,v_i \in \Re^d \land \forall i \neq j, v_i \perp v_j \land  v_i \perp c\land||v_i||=1
	\}   \subset \Re^{d(k+1)} .             $$
	
	By the conditions of the proposition, we can assume w.l.o.g. that there exists a finite subfamily $\F_0 \subset \F$ that cannot be pierced by $m-1$ $k$-flats. Let elements of
	$$  \mathcal{A}^m  = \{   (c^1,v_1^1 ,\ldots, v_k^1, \ldots, c^m,v_1^m ,\ldots, v_k^m)  :  \forall  1\leq j \leq m,  (c^j,v_1^j ,\ldots, v_k^j) \in \mathcal{A}
	\}   \subset \Re^{d(k+1)m}            $$
	(the cartesian product of $m$ copies of $\mathcal{A}$) represent $m$-tuples of $k$-flats in $\Re^d$.
	
	Note that $\mathcal{A}^m$ is not compact (as a subset of $\Re^{d(k+1)m}$),  since $||c^j||$ may be arbitrarily large. However, for any fixed set $B \in \F$, the subset $\Pi_B\subset \mathcal{A}^m $ that contains the representations of all $m$-tuples of $k$-flats intersecting $B\cup \F_0$, is a compact subset of $\mathcal{A}^m$. 
	Indeed, these norms $||c^j||$ of all $m$ components $c^j$, are bounded by the radius of a ball centered at the origin that includes the union $\bigcup\F_0$.
	
	Consider the family $\{\Pi_B\}_{B \in \F}$, where $\Pi_B$ is the set of representions all $m$-tuples of
	$k$-flats that pierce $B \cup \F_0$. (For each such $m$-tuple of $k$-flats, we take all possible $(d(k+1)m)$-tuples that represent it.) Each $\Pi_B$ is compact, and by the
	assumption, any finite subfamily $\{\Pi_{B_i}\}_{i=1}^n$ has non-empty intersection (that contains the representation of some $m$-tuple of $k$-flats, $\bigcap_{i=1}^n\Pi_{B_i}=\Pi_{\cup_{i=1}^n B_i}$, that together pierce $\F_0\cup \{B_1,\ldots,B_n\}$).
	Therefore, by the finite intersection property of compact sets, the whole family $\{\Pi_B\}_{B \in \F}$ has non-empty intersection. Any element in this non-empty intersection represents an $m$-tuple of $k$-flats that pierce together all the sets in $\F$.
\end{proof}

\section{Proof of the Main Theorem} \label{sec:main}

We restate Theorem \ref{Thm:Main1}, in a formulation that will be more convenient for the proof:

\medskip

 \noindent \textbf{Theorem \ref{Thm:Main1} -- restatement.}
Let $\F$ be a family of compact  near-balls in $\mathbb{R}^d$. Let $0 \leq k \leq d-1$. Then exactly one of the two following conditions  holds:
\begin{itemize}
	\item $\F$ can be pierced by a finite number of $k$-flats.
	
	\item $\F$ contains an infinite sequence of sets that are $k$-independent (i.e., no $k$-flat pierces $k+2$ of them).
\end{itemize}

\subsection{The induction basis for $k=0$ and every $d$}

The proof of Theorem \ref{Thm:Main1} is by induction, passing from $(k-1,d-1)$ to $(k,d)$. The induction basis is the following Lemma, which is the
case $k=0$ of Theorem \ref{Thm:Main1}.

\begin{lemma}\label{Thm:1-dim} 
	Let $\F$ be a family of compact near-balls in $\mathbb{R}^d$. Then exactly one of the two following conditions holds:
	\begin{itemize}
		\item $\F$ can be pierced by a finite number of points.
		
		\item $\F$ contains an infinite sequence of pairwise disjoint sets.
	\end{itemize} 
\end{lemma}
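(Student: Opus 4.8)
\medskip
\noindent\textbf{Proof plan.}
The two alternatives are mutually exclusive: if $\F$ is pierced by $m$ points then, by the pigeonhole principle, among every $m+1$ of its members two share a piercing point and hence intersect, so $\F$ has no pairwise-disjoint subfamily of size $m+1$. It therefore remains to prove that if $\F$ has no infinite pairwise-disjoint subfamily, then $\F$ is pierced by finitely many points. Using Zorn's lemma, choose a pairwise-disjoint subfamily $\mathcal D\subseteq\F$ maximal with respect to inclusion; were $\mathcal D$ infinite it would be the sought infinite pairwise-disjoint subfamily, so $\mathcal D=\{C_1,\dots,C_t\}$ is finite. By maximality every $B\in\F$ meets some $C_i$, so $\F=\bigcup_{i=1}^{t}\{B\in\F:\ B\cap C_i\neq\emptyset\}$, and it suffices to prove the following.

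\smallskip\noindent\emph{Subclaim.} If $\mathcal H$ is a family of compact near-balls all of which meet a fixed compact set $C$, and $\mathcal H$ has no infinite pairwise-disjoint subfamily, then $\mathcal H$ is pierced by finitely many points.

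\smallskip
To prove the Subclaim, fix a ball $B(a,\Delta)\supseteq C$ and, for $B\in\mathcal H$, abbreviate $r_B:=r(\mathrm{inscr}(B))$, $R_B:=r(\mathrm{escribed}(B))$, so that $B(x_B,r_B)\subseteq B\subseteq B(x_B,R_B)$, $R_B\le\min\{Kr_B,\ K+r_B\}$, and, since $B$ meets $C$, $\mathrm{dist}(x_B,C)\le R_B$ and thus $|x_B-a|\le R_B+\Delta$. Fix a threshold $C_0=C_0(\Delta,K)$ (specified below) and write $\mathcal H=\mathcal H_{\mathrm{lg}}\cup\mathcal H_{\mathrm{sm}}$ according to whether $r_B>C_0$ or $r_B\le C_0$. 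For $\mathcal H_{\mathrm{lg}}$ one uses the \emph{second} near-ball inequality, which makes these sets essentially round: $R_B\le K+r_B\le(K+1)r_B$. If $|x_B-a|\le r_B$ then $a\in B(x_B,r_B)\subseteq B$; otherwise $r_B<|x_B-a|\le r_B+K+\Delta$, so $x_B=a+\rho_Bu_B$ with $u_B\in\mathcal S^{d-1}$ and $|\rho_B-r_B|\le K+\Delta=:L$. Cover $\mathcal S^{d-1}$ by finitely many spherical caps $Q_1,\dots,Q_s$ of angular radius $\pi/3$ with centers $w_1,\dots,w_s$; a direct computation shows that, provided $C_0$ is chosen large enough in terms of $L$, the single point $p_j:=a+4Lw_j$ lies in $B(x_B,r_B)\subseteq B$ for every $B\in\mathcal H_{\mathrm{lg}}$ with $u_B\in Q_j$. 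Hence $\mathcal H_{\mathrm{lg}}$ is pierced by $\{a\}\cup\{p_1,\dots,p_s\}$.

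\smallskip
For $\mathcal H_{\mathrm{sm}}$ one uses the \emph{first} near-ball inequality, which makes these sets genuinely small: $r_B\le C_0$ forces $R_B\le KC_0$, so $B\subseteq B(x_B,KC_0)$ and, since $\mathrm{dist}(x_B,C)\le KC_0$, all of $\mathcal H_{\mathrm{sm}}$ lies in one fixed bounded ball $U$; moreover $\mathrm{diam}(B)\le 2R_B\le 2Kr_B$. For each $\varepsilon>0$ the subfamily $\{B\in\mathcal H_{\mathrm{sm}}:\ r_B\ge\varepsilon\}$ is pierced by a finite grid of mesh $\varepsilon/\sqrt d$ in $U$, since each such $B$ contains a ball of radius $\varepsilon$. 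It remains to handle near-balls of arbitrarily small radius. If $B_n\in\mathcal H_{\mathrm{sm}}$ with $r_{B_n}\to0$ then $\mathrm{diam}(B_n)\to0$, and after passing to a subsequence $x_{B_n}\to q$ for some $q\in\overline U$, so $B_n\subseteq B(q,D_n)$ with $D_n\to0$. If infinitely many $B_n$ avoid $q$, put $d_n:=\mathrm{dist}(q,B_n)>0$; since $D_n\to0$ one may recursively pick $n_1<n_2<\cdots$ with $D_{n_{j+1}}<\min\{d_{n_1},\dots,d_{n_j}\}$, and then $B_{n_j}\subseteq B(q,D_{n_j})$ while $B_{n_i}$ is disjoint from $B(q,d_{n_i})^{\circ}$ for $i<j$, giving an infinite pairwise-disjoint subfamily --- a contradiction. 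So cofinitely many $B_n$ contain $q$; call such a limit $q$ an \emph{anchor}. Note that an anchor is contained in members of $\mathcal H_{\mathrm{sm}}$ of arbitrarily small diameter. There are only finitely many anchors: otherwise, choosing distinct anchors $q_1,q_2,\dots$ and, for each $m$, a member $B^{(m)}\ni q_m$ of diameter smaller than one third of the distance from $q_m$ to all other $q_i$'s (positive, since the $q_i$ accumulate at a single point and are distinct), one gets an infinite pairwise-disjoint subfamily. Finally, once the (finitely many) anchors $q_1,\dots,q_N$ are fixed, the members of $\mathcal H_{\mathrm{sm}}$ containing none of them have $r_B$ bounded below by some $\varepsilon_0>0$ --- a vanishing sequence among them would produce an anchor not on the list --- so that $\mathcal H_{\mathrm{sm}}$ is pierced by $\{q_1,\dots,q_N\}$ together with the grid of mesh $\varepsilon_0/\sqrt d$ in $U$. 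This proves the Subclaim, and with it the lemma.

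\smallskip
The main obstacle is the Subclaim, and inside it the analysis of $\mathcal H_{\mathrm{sm}}$ --- controlling arbitrarily small near-balls that accumulate inside a bounded region. Both near-ball inequalities are genuinely needed: the first guarantees that a vanishing inscribed radius forces a vanishing diameter, which is what powers the ``scale-separation'' argument converting an accumulation point into an infinite pairwise-disjoint subfamily; the second guarantees that large near-balls are essentially round, so that they may be pierced through finitely many ``directions'' by exploiting the compactness of $\mathcal S^{d-1}$. (Proposition~\ref{Prop:Compactness} is available but not needed at this stage, as all the piercing sets produced are explicit and finite.)
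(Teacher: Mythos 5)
Your proof is correct, but it takes a genuinely different route from the paper's. The paper first reduces to the case where all members of $\F$ lie in a fixed ball $B(0,R)$ (Claim~\ref{cl:bounded}: if the inscribed balls recede to infinity, condition (2) of Definition~\ref{def:near-balls} already yields an infinite pairwise-disjoint sequence; otherwise each large member is replaced by a unit ball inside it), and then runs a local dichotomy on the points of $B(0,R)$: a ``type (a)'' point produces a shrinking disjoint sequence via condition (1), and if every point is of ``type (b)'', a Heine--Borel covering of $B(0,R)$ assembles the local finite piercing sets into a global one. You instead reduce via a maximal pairwise-disjoint subfamily (so that every member of $\F$ meets one of finitely many fixed compact sets $C_i$), and then split each resulting subfamily by the inscribed radius: large near-balls are pierced by the explicit points $a+4Lw_j$ in finitely many directions, which is where condition (2) enters, while small near-balls are confined to a bounded region by condition (1) and are pierced by finitely many accumulation ``anchors'' together with an $\varepsilon_0$-grid, the absence of an infinite disjoint subfamily being exploited through your scale-separation argument at accumulation points. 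What each approach buys: the paper's argument is shorter and more uniform (a single covering argument does all the work, and the same weak/strong dichotomy reappears in the induction step), whereas yours is more constructive, producing explicit piercing points with bounds depending only on $K$, $d$, $\Delta$ and the number of anchors, and it avoids the replacement trick of Claim~\ref{cl:bounded}. One small point to tighten: in the ``finitely many anchors'' step, the distance from $q_m$ to the other chosen anchors need not be positive if $q_m$ is itself their accumulation point; choose the distinct anchors so that they converge (possible by compactness of $\overline U$) and discard the one term, if any, equal to the limit, after which your one-third-of-the-distance construction goes through verbatim.
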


Before proceeding into the proof of Lemma \ref{Thm:1-dim}, we prove a reduction to the case where all elements of $\F$ lie within a closed bounded ball $B(0,R) \subset \Re^d$.

\begin{claim}\label{cl:bounded}
	Suppose Lemma \ref{Thm:1-dim} holds when $\bigcup_{B\in \F}B \subset B(0,R)$, for some $R>0$. Then Lemma \ref{Thm:1-dim} holds without any restriction.
\end{claim}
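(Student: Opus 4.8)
The two alternatives are mutually exclusive (finitely many points meet at most finitely many pairwise disjoint sets), so it suffices to show: assuming Lemma~\ref{Thm:1-dim} for families contained in a ball, every family $\F$ of compact near-balls that cannot be pierced by finitely many points contains an infinite pairwise disjoint subfamily. Throughout, write $r_{\mathrm{in}}(B)=r(\mathrm{inscr}(B))$ and $r_{\mathrm{out}}(B)=r(\mathrm{escribed}(B))$, so that $B(x_B,r_{\mathrm{in}}(B))\subseteq B\subseteq B(x_B,r_{\mathrm{out}}(B))$, and recall that condition \textbf{(2)} of Definition~\ref{def:near-balls} gives $r_{\mathrm{out}}(B)-r_{\mathrm{in}}(B)\le K$ for every $B\in\F$.

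The heart of the argument is the following geometric lemma, call it $(\star)$: \emph{if $\G$ is a family of compact near-balls with constant $K$ such that every member of $\G$ meets a fixed ball $B(0,a)$, then either $\G$ has an infinite pairwise disjoint subfamily, or $\G$ can be pierced by finitely many points.} Granting $(\star)$, the claim follows by a greedy construction. Since $\F$ is not finitely pierceable it is not pierced by $\{0\}$, so pick $B_1\in\F$ with $0\notin B_1$ and a ball $B(0,a_1)\supseteq B_1$. Inductively, given pairwise disjoint $B_1,\dots,B_n\in\F$ and a ball $B(0,a_n)\supseteq B_1\cup\cdots\cup B_n$ (which exists as the union is compact), either some $B\in\F$ is disjoint from $B(0,a_n)$ — then set $B_{n+1}=B$, note it is disjoint from $B_1,\dots,B_n$, and enlarge $a_n$ to $a_{n+1}$ — or the process halts. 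If it never halts we obtain the desired infinite pairwise disjoint subfamily; if it halts at stage $n$, then every $B\in\F$ meets $B(0,a_n)$, so $(\star)$ applied to $\G=\F$ with $a=a_n$ yields either the infinite pairwise disjoint subfamily or a finite piercing of $\F$, the latter contradicting our assumption.

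To prove $(\star)$, fix $a$ and split $\G=\G_{\mathrm{small}}\sqcup\G_{\mathrm{big}}$ at a threshold $M=M(a,K)$, where $\G_{\mathrm{small}}=\{B\in\G:\,r_{\mathrm{in}}(B)\le M\}$. By condition \textbf{(2)}, $B\in\G_{\mathrm{small}}$ has $r_{\mathrm{out}}(B)\le M+K$, and since $B$ meets $B(0,a)$ the triangle inequality gives $B\subseteq B(0,a+2(M+K))$; thus $\G_{\mathrm{small}}$ is a \emph{bounded} family, so by the hypothesis it is either finitely pierceable or contains an infinite pairwise disjoint subfamily — in the latter case we are done. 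For $\G_{\mathrm{big}}$ I use a directional covering argument. A member $B$ with $0\in B$ is pierced by $\{0\}$; so assume $0\notin B$. Then $\|x_B\|>r_{\mathrm{in}}(B)$ (as $B\supseteq B(x_B,r_{\mathrm{in}}(B))$), while $B$ meeting $B(0,a)$ and $B\subseteq B(x_B,r_{\mathrm{out}}(B))$ give $\|x_B\|\le a+r_{\mathrm{out}}(B)\le a+K+r_{\mathrm{in}}(B)$; hence $\theta_B:=\|x_B\|-r_{\mathrm{in}}(B)\in(0,a+K]$ — this bounded ``offset from the origin'' is exactly what condition \textbf{(2)} buys us. A direct computation with $\|z-x_B\|^2$ shows that a point $z$ with $\|z\|=a+K+2$ lies in $B(x_B,r_{\mathrm{in}}(B))\subseteq B$ whenever $\langle z,\,x_B/\|x_B\|\rangle\ge a+K+1$, provided $r_{\mathrm{in}}(B)>M$ for a suitable $M=M(a,K)$ absorbing an $O(1/r_{\mathrm{in}}(B))$ error. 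Now take a finite set $N\subseteq\mathcal{S}^{d-1}$ whose spherical caps $\{\hat v:\langle\hat v,\hat w\rangle\ge\frac{a+K+1}{a+K+2}\}$, $\hat w\in N$, cover $\mathcal{S}^{d-1}$, and let $Z=\{(a+K+2)\hat w:\hat w\in N\}$: for each $B\in\G_{\mathrm{big}}$ with $0\notin B$, choosing $\hat w\in N$ close to $x_B/\|x_B\|$ gives a point of $Z$ inside $B$. Hence $\G_{\mathrm{big}}$ is pierced by the finite set $Z\cup\{0\}$, and combining the two parts proves $(\star)$.

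The main obstacle is $(\star)$, and within it the directional covering argument for $\G_{\mathrm{big}}$: this is the only place where the near-ball hypothesis is used essentially — condition \textbf{(2)} forces every large member of $\G$ that reaches the fixed ball $B(0,a)$ to contain a ball of radius $r_{\mathrm{in}}(B)$ whose center lies within $a+K$ of the origin in a definite direction, so its inscribed ball covers a fixed-angle cap of directions and finitely many points suffice for all of them at once. Note also that the only input from Lemma~\ref{Thm:1-dim} is its \emph{bounded} case, applied solely to the genuinely bounded family $\G_{\mathrm{small}}$, so the reduction is not circular. (Condition \textbf{(1)} is not used here; it is needed only for the bounded case itself, as the example of pairwise intersecting segments already shows.)
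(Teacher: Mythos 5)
Your proof is correct, and while its overall skeleton matches the paper's (either sets keep escaping to infinity, in which case one extracts an infinite pairwise disjoint sequence directly, or every set comes within a fixed distance of the origin), it resolves the remaining case -- sets near the origin but possibly huge -- by a genuinely different mechanism. The paper replaces every $B$ with $r(\mathrm{inscr}(B))>1$ by a unit ball inside $\mathrm{inscr}(B)$ at the same distance from the origin; the modified family is then literally bounded, the bounded case of Lemma~\ref{Thm:1-dim} is applied to it, a finite piercing set transfers back because sets only shrank, and an infinite disjoint subfamily transfers back after discarding the finitely many pairwise disjoint unit balls that fit inside $B(0,R)$. You instead split at an inscribed-radius threshold $M(a,K)$: the small sets form a bounded family (via condition (2)) to which the bounded case is applied, and the large sets are pierced outright by an explicit finite set -- the origin plus a cap net on the sphere of radius $a+K+2$ -- the point being that a large near-ball meeting $B(0,a)$ and missing the origin has $\theta_B=\|x_B\|-r(\mathrm{inscr}(B))\in(0,a+K]$, so its inscribed ball swallows a spherical cap of fixed angular radius. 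The computation you leave implicit does close: the containment $\|z-x_B\|\le r(\mathrm{inscr}(B))$ reduces to $(a+K+2)^2-\theta_B^2\le 2\|x_B\|\,(a+K+1-\theta_B)$, which holds as soon as $r(\mathrm{inscr}(B))\ge (a+K+2)^2/2$, so $M(a,K)$ can be taken to be this value. Comparing the two: the paper's shrinking trick is shorter and needs no quantitative covering estimate; your route applies the bounded-case hypothesis only to an honest subfamily of $\F$ (no modified sets, hence no back-transfer or unit-ball counting step), isolates a reusable standalone fact (near-balls with inscribed radius above a threshold that all meet a fixed ball are pierced by an explicit finite point set), and makes transparent that only condition (2) of Definition~\ref{def:near-balls} is needed for the reduction, whereas the paper also invokes condition (1) when bounding the modified family.
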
	
\begin{proof}
	Consider the set $\{ dist(insc(B_i),0)  \}_{i=1,2,\ldots} \subset \Re^+$ where $B_i\in \F$ for all $i$. If this set is unbounded, then it contains an infinite sequence $\{ dist(insc(B_i),0) : B_i \in \F     \}_{i=1,2,\ldots}$ 
	that tends to infinity. By condition 2 of Definition \ref{def:near-balls}, $\lim_{i \to \infty} dist(B_i,0)=\infty$, and one can inductively choose an infinite subsequence $\{B_{i_n}\}_{n=1}^{\infty}  $ of $\{B_i\}_{i=1}^{\infty}$, such that $dist(B_{i_{n+1}},0)>max \{||x||:x\in B_{i_n}\}$.
	In this case, the second assertion holds, and we are done.
	
	From now on we assume that there exists some $R \in \Re$ satisfying $K(\F)+2=K+2<R $, such that for any $B \in \F$, $dist(insc(B),0) \leq R-K-2$. Replace each $B \in \F$ with $r(insc(B))>1$, by some closed ball $B' \subset insc(B)$ with $r(B')=1$, such that $dist(0,insc(B))=dist(0,B')$. Let $\F'$ be the obtained family. Clearly, $\F'$ is a family of compact near-balls. For any $B\in \F'$, $insc(B)$ is contained in $ B(0,R-K)$, and therefore, by condition 1 of Definition \ref{def:near-balls}, $ B \subset B(0,R)$.
	
	By the assumption of our claim, either $\F'$ can be pierced by finitely many points, or $\F'$ contains an infinite sequence $\F'' \subset \F'$ of pairwise disjoint balls. In the first case, the finite piercing set of $\F'$ pierces $\F$ as well.
	
	In the second case, remove from $\F''$ all balls with radius 1. There are only finitely many such balls, since $\F'' \subset \F' \subset B(0,R)$, and the elements of $\F''$ are pairwise disjoint.  
	After removing from $\F''$ all balls with radius 1, we are left with an infinite subfamily of sets each of which belongs to $\F$ (since in the transition from $\F$ to $\F'$, all the modified sets become radius-1 balls of $\F'$), which are pairwise disjoint.
\end{proof}

\begin{proof}[Proof of Lemma \ref{Thm:1-dim}]
	By Claim \ref{cl:bounded} we can assume that there exists $R>0$ such that each $B \in \F$ is contained in $B(0,R)$.
		Recall that by our convention in Section \ref{sec:def}, each $B \in \F$ contains a ``center'' $x_B \in B$, which is the center of both $insc(B)$ and $escribed(B)$. 
	
	Each $x \in B(0,R) $ is of exactly one of the two following types:
	
	\noindent \textbf{Type (a):} For each $\delta>0$, there exists some $B \in \F$, such that $x_B \in B^{\circ}(x,\delta) $, $r(insc(B))<\delta$ and $x \notin B$. 
	
	\noindent \textbf{Type (b):} There exists $0 < \delta=\delta(x)$ such that for any $B \in \F$ with $x_B \in B^{\circ}(x,\delta)$, the following holds: Either $r(insc(B))\geq \delta$ or $x \in B$. 
	
	\medskip
	If $B(0,R) $ contains some point $x$ of type $(a)$, then there exists an infinite sequence of pairwise disjoint sets in $\F$ (whose centers tend to $\{x\}$). Indeed, start with $\delta_0=1$ and pick some $B_0 \in \F$, with $x_{B_0} \in B^{\circ}(x,\delta_0) $, with $r(insc(B_0))<\delta_0$ and $B_0 \cap \{x\} = \emptyset$. Since $B_0$ is compact, it has a positive distance $\epsilon$ from $x$. Let $\delta_1=\frac{\epsilon}{10K}$ and pick some $B_1 \in \F$, with $x_{B_1} \in B^{\circ}(x,\delta_1) $, $r(insc(B_1))< \delta_1$ and $B_1 \cap \{x\} = \emptyset$. By condition 1 of Definition \ref{def:near-balls}, $r(escribed(B_1))<\frac{\epsilon}{10}$, hence $B_1$ is disjoint from $B_0$, and closer to $x$ than $B_0$.
	Continue in the same manner to construct an infinite sequence of pairwise disjoint sets in $\F$.
	
	The remainig case is where each $x \in B(0,R)$ is of type $(b)$. Then for each $x \in B(0,R)$ there exists $0 < \delta=\delta(x)$ such that any $B \in \F$ with $x_B \in B^{\circ}(x,\delta)$ can be pierced by finitely many points, say, by $f(x)$ points. (Note that the exact value of $f(x)$ depends on the choice of $\delta=\delta(x) $.)

	By the finite intersection property of compact sets in $\Re^d$, the cover $ \bigcup_{ x \in B(0,R)}  B^{\circ}(x,\delta(x))     $ of $B(0,R)$ has a finite sub-cover $B(0,R) \subset  \bigcup_{ i=1}^k  B^{\circ}(x_i,\delta(x_i))     $. Since all the sets in $\F$ whose center lies in $B^{\circ}(x_i,\delta(x_i))$ can be pierced by $f(x_i)$ points, it follows that all the elements of $\F$ can be pierced by at most $\Sigma_{i=1}^k f(x_i)$ points.
\end{proof}

\begin{remark}
	The reader can verify that the assertion of Lemma \ref{Thm:1-dim} holds, even where we replace the first condition of Definition \ref{def:near-balls} by the following weaker condition: $$\lim_{r \to o}    \sup \{    r(escribed(B))  : B \in \F, r(insc(B)) \leq r \}     =0.$$
\end{remark}

\subsection{The induction step}
Let $1 \leq k,d\in \N$, and assume Theorem \ref{Thm:Main1} holds for $k-1,d-1$. Let $\C$ be the compactification of $\Re^d$, described in Section \ref{sec:def}. 
\begin{definition}
	
	\begin{enumerate}		
		\item A point $x \in \C$ is \emph{weak} if there exists a positive $\epsilon=\epsilon(x)$ such that the family $\{  B\in \F : x_B \in B^{\circ} (x,\epsilon)\}$ can be pierced by finitely many $k$-flats.
		\item A point $x \in \C$ is \emph{strong} if for every $\epsilon>0$ the family $\{  B\in \F : x_B \in B^{\circ} (x,\epsilon)\}$ cannot be pierced by finitely many $k$-flats.
	\end{enumerate}
\end{definition}
\noindent Clearly, each $x \in \C$ is either weak or strong.

\medskip

If every $x \in \C$ is weak, then, by compactness, the open cover $\bigcup_{x\in\C}B^{\circ}(x,\epsilon(x))$ of $\C$, has a finite subcover, and therefore, the whole family $\F$ can be pierced by finitely many $k$-flats. Hence, from now on, we assume that $\C$ contains (either a finite or an infinite) strong point, $x_0$.

\begin{claim}
	\label{cl:comp}
	Let $x_0 \in \C$ be a strong point. Then there exists a sequence $\{B_n\}_{n=1}^{\infty} \subset \F$ with ``centers'' $x_n:=x_{B_n}$, such that $\lim_{n \to \infty}x_n=x$, and the family  $\{B_n\}_{n=1}^{\infty}$ cannot be pierced by finitely many $k$-flats.
\end{claim}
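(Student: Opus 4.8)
The goal is to extract from a strong point $x_0$ a \emph{single} sequence $\{B_n\}$ whose centers converge to $x_0$ and which is not pierceable by finitely many $k$-flats; note that the definition of ``strong'' only gives us, for each $\epsilon$, \emph{some} such family inside $B^\circ(x_0,\epsilon)$, and we must glue these up into one sequence. The natural approach is a diagonal / exhaustion argument over a decreasing sequence of neighborhoods $\epsilon_1 > \epsilon_2 > \cdots \to 0$. For each $j$, let $\F_j := \{B \in \F : x_B \in B^\circ(x_0,\epsilon_j)\}$; by strongness each $\F_j$ cannot be pierced by finitely many $k$-flats. The plan is to build $\{B_n\}$ so that it meets each $\F_j$ in an infinite subsequence which is itself not finitely pierceable; since centers of elements of $\F_j$ are within $\epsilon_j$ of $x_0$, the full sequence then has $x_n \to x_0$.

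The concrete construction I would run: maintain at stage $j$ an infinite subfamily $\G_j \subseteq \F_j$ that is not finitely pierceable, with $\G_{j+1} \subseteq \G_j$. Start with $\G_1 = \F_1$. Given $\G_j$ not finitely pierceable, I claim $\G_j \cap \F_{j+1}$ is still not finitely pierceable: indeed $\G_j \setminus \F_{j+1}$ consists of sets whose centers lie in the ``annular'' region $B^\circ(x_0,\epsilon_j) \setminus B^\circ(x_0,\epsilon_{j+1})$ — however this is not automatically finitely pierceable, so a slightly different bookkeeping is needed. Instead, the cleanest route is: do \emph{not} insist $\G_{j+1} \subseteq \F_{j+1}$; rather, at stage $j$ pick one \emph{new} element $B_j \in \F_j \setminus \{B_1,\ldots,B_{j-1}\}$ — this is possible since each $\F_j$ is infinite — but choose it cleverly so that the tail is not finitely pierceable. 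To make this precise I would use the following selection lemma: if $\G$ is a family of compact sets that is \emph{not} pierceable by finitely many $k$-flats, then for any single set $B$, the family $\G \setminus \{B\}$ is also not finitely pierceable (trivial), and more usefully, $\G$ contains an infinite subfamily $\G'$ such that still $\G \setminus \G'$... — actually the right statement is just a transfinite/greedy argument: repeatedly applying Proposition~\ref{Prop:Compactness} in contrapositive form, $\G$ is not finitely pierceable iff every finite subfamily... no, the contrapositive says there is a finite subfamily not pierceable by $m$ flats for each $m$.

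So here is the argument I would actually write. Since $\F_j$ is not pierceable by finitely many $k$-flats, by Proposition~\ref{Prop:Compactness} (contrapositive) for each $m$ there is a \emph{finite} subfamily of $\F_j$ not pierceable by $m$ $k$-flats. Enumerate: for each $j$ and each $m$, fix a finite $\H_{j,m} \subseteq \F_j$ not pierceable by $m$ $k$-flats. Now let $\{B_n\}$ be an enumeration of $\bigcup_{j,m} \H_{j,m}$ that lists, for each fixed $j$, the elements of $\bigcup_m \H_{j,m}$ cofinally (interleave over $j$ by a standard pairing of $\N\times\N$). Two things to check: first, $x_n \to x_0$. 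This requires that for every $\epsilon$, all but finitely many $B_n$ have center in $B^\circ(x_0,\epsilon)$ — which holds provided we choose $\epsilon_j \to 0$ and arrange the enumeration so that elements drawn from $\H_{j,m}$ with $\epsilon_j \geq \epsilon$ appear only finitely often; this is a routine reordering since for fixed $\epsilon$ there are only finitely many such $j$, but each contributes infinitely many sets — so instead take $\epsilon_j = 1/j$ and in the pairing-function enumeration ensure the $j$-index tends to infinity, e.g. put $B_n \in \H_{j(n),m(n)}$ with $j(n)\to\infty$; this forces $\|x_{B_n} - x_0\| < 1/j(n) \to 0$. Second, $\{B_n\}$ is not finitely pierceable: given any $m$, the subfamily $\H_{j,m}$ (for any $j$ with $\H_{j,m} \subseteq \{B_n\}$, which holds for all $j$) is a finite subset of $\{B_n\}$ not pierceable by $m$ $k$-flats, so by Proposition~\ref{Prop:Compactness} again $\{B_n\}$ is not pierceable by $m$ $k$-flats; as $m$ was arbitrary, $\{B_n\}$ is not pierceable by finitely many $k$-flats.

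\textbf{Main obstacle.} The one genuine subtlety is reconciling the two requirements: the centers must converge to $x_0$, which pushes us to take sets from ever-smaller neighborhoods, while non-pierceability is only guaranteed ``uniformly'' inside each fixed $\F_j$. The resolution above — extracting for each target piercing number $m$ a finite bad subfamily from the \emph{small} neighborhood $\F_{j}$ and letting $j \to \infty$ along the enumeration — sidesteps any need for a nested sequence of non-pierceable subfamilies, and the repeated use of Proposition~\ref{Prop:Compactness} in both directions (finite $\Rightarrow$ infinite for the conclusion, infinite $\Rightarrow$ finite for locating the witnesses) is what makes the bookkeeping go through cleanly. I would double-check that $x_0$ may be a point at infinity and that $B^\circ(x_0,\epsilon)$ then means the finite-$\epsilon$-neighborhood defined in Section~\ref{sec:def}; the convergence statement $x_n \to x_0$ is still the standard topological one in $\C$, and $1/j \to 0$ in that neighborhood basis works the same way.
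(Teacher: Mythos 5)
Your proposal is correct and is essentially the paper's own argument: the paper applies Proposition~\ref{Prop:Compactness} in contrapositive to extract, for each $m$, a finite family $G_m\subseteq\{B\in\F : x_B\in B^{\circ}(x_0,\frac{1}{m})\}$ that cannot be pierced by $m$ $k$-flats, and concatenates $G_1,G_2,\ldots$ --- precisely your diagonal choice $\mathcal{H}_{m,m}$. The only slip in your write-up (claiming every $\mathcal{H}_{j,m}$ lies in the sequence while also demanding $j(n)\to\infty$) disappears under this diagonal bookkeeping, since for each $m$ a single witness family taken from the $\frac{1}{m}$-neighborhood suffices.
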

\begin{proof}[Proof of Claim \ref{cl:comp}]
	We construct the sequence $\{B_n\}_{n=1,2,3,\ldots} \subset \F$ as follows.	
	For each $m \in \mathbb{N}$, we find in $\{B\in \F: x_B \in B^{\circ}(x,\frac{1}{m})  \}$ a finite family $G_m$ of sets that cannot be pierced by $m$ $k$-flats (this is possible, by Proposition \ref{Prop:Compactness}). We define the sequence $\{B_n\}_{n=1,2,3,\ldots}$ as $\bigcup_{m \in \mathbb{N}} G_m$.
	Namely, we arbitrarily order the sets in each $G_m$ and add them to the sequence, allowing repetitions, starting with $m=1$, proceeding to $m=2$, etc..
	We have $\lim_{n \to \infty} x_n = x_0$, (no matter whether $x_0$ is a finite point or a point at infinity). Furthermore, $\{B_n\}_{n=1,2,3,\ldots}$, cannot be pierced by $m$ $k$-flats (for any $m \in \mathbb{N}$) since it includes the family $G_{m}$ that cannot be pierced by $m$ $k$-flats by its construction. Hence, $\{B_n\}_{n=1,2,3,\ldots}$ cannot be pierced by a finite number of $k$-flats, as asserted.   
\end{proof}

The rest of the proof depends on the strong point $x_0$ being finite or a point at infinity. Though there are some similar parts in the analysis of the cases, there are also essential different arguments, so for the sake of convenience we treat each case separately, and shorten some explanations in the second case.

\bigskip

\noindent \textbf{Case 1: The strong point $x_0$ is a point at infinity:} Assume w.l.o.g. that $x_0$ is the point at infinity on the positive ray of the $d$-axis. Let $\G=\{B_n\}_{n=1}^{\infty}\subset \F$ be the sequence guaranteed by Claim \ref{cl:comp}. We remove from $\G$ all the sets that meet the $d$-axis, clearly the remaining set $\G$ (we still call it $\G$) still has the property of not being pierced by finitely many $k$-flats.
We project each $B_n \in \G$ onto its first $d-1$ coordinates. Let the resulting set be $\G'$. Note that since each ball is projected to a ball of the same radius, $\G'$ is a family of compact near-balls and $K(\G')\leq K(\F)$.

By the induction hypothesis, either $\G'$ can be pierced by a finite number of $(k-1)$-flats in $\mathbb{R}^{d-1}$, or else it includes a sequence of $(d-1)$-dimensional sets that are $(k-1)$-independent.

The first option cannot happen, as otherwise, one could pierce $\G$ by finitely many $k$-flats (which are the pre-images of the $(k-1)$-flats-transversal in $\Re^{d-1}$ under the projection), contrary to the choice of $\G$. 

Hence, there exists a sub-sequence $\bar{\G} = \{B_{n_l}\}_{l=1,2,\ldots} \subset \G$ of sets whose projections are $(k-1)$-independent in $\mathbb{R}^{d-1}$. Clearly, $\lim_{l \to \infty} x_{B_{n_l}} = x_0$.
From now on, we restrict our attention to this sequence, $\bar{\G}$, and construct inductively a subsequence $\{B^i\}_{i=1,2,\ldots} \subset \bar{\G}$ that will be $k$-independent in $\mathbb{R}^{d}$.

The first $k+1$ elements $B^1,\ldots,B^{k+1} \in \bar{\G}$ can be chosen arbitrarily. (They will be $k$-independent in $\Re^d$, since their projections are $(k-1)$-independent in $\Re^{d-1}$.) Assume we already chose $m$ elements of the subsequence. To choose the next set, we look at each of the ${{m} \choose {k+1}}$ $(k+1)$-tuples of sets that have already been chosen. Let such a $(k+1)$-tuple be $B^{i_1},\ldots,B^{i_{k+1}} \in \bar{\G}$. By assumption, the corresponding projections on the first $d-1$ coordinates cannot be pierced by a $(k-1)$-flat in $\mathbb{R}^{d-1}$. This implies that no $k$-flat that is parallel to the $d$-axis can pierce the whole $(k+1)$-tuple,  $B^{i_1},\ldots,B^{i_{k+1}} $.

Consider the family $U$ of all $k$-flats that pierce the whole $(k+1)$-tuple, $B^{i_1},\ldots,B^{i_{k+1}}$.
Let $U_0=\{J-J  :  J \in U\}$ be the family of all translates to the origin of the $k$-flats in $U$. As none of the $k$-flats in $U$ is parallel to the $d$-axis, no element of $U_0$ contains the point $(0,0,\ldots,0,1)=\hat{x} \in \mathcal{S}^{d-1}$. By compactness of the elements of $\F$, this implies that there exists an $\epsilon_0>0$, such that the $\epsilon_0$-neighborhood of $\hat{x}$ in $\mathcal{S}^{d-1}$ is disjoint from all elements of $U_0$.

This argument works for each $(k+1)$-subset of the $m$ elements we have already chosen. Let $\epsilon_1$ be the minimum of the $\epsilon_0$ values. 

\begin{observation} \label{obs:cone}
	The cone $\mathcal{C}(\hat{x},\epsilon_1)=
	\{ x \in \Re^d  :  \langle \hat{x},x \rangle  \geq ||x||\cos \epsilon_1\} \setminus\{0\}$ around the $d$-axis with apex at the origin and aperture $\epsilon_1$, is disjoint from each $k$-flat $J_0$ where $J_0 \in \A $, and $$\A  =  \{ J-J: J \mbox{ is a }k\mbox{-flat that intersects }B^{i_1},\ldots,B^{i_{k+1}} \mbox{ for some }1 \leq i_1<\ldots<i_{k+1} \leq m\}.$$ 
	(Each element in $\A$ is a translate-to-the-origin of some $k$-flat $J$ that meets $k+1$ sets among the $m$ previously-chosen $B^i$'s.)
\end{observation}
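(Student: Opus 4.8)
The plan is simply to unwind the definitions of $U$, $U_0$, $\epsilon_0$ and $\epsilon_1$ introduced just before the statement, and to use the fact that both a linear subspace through the origin and the cone $\mathcal{C}(\hat{x},\epsilon_1)$ are invariant under multiplication by positive scalars. Fix $J_0 \in \A$. By the definition of $\A$ there are indices $1 \le i_1 < \dots < i_{k+1} \le m$ and a $k$-flat $J$ meeting $B^{i_1},\dots,B^{i_{k+1}}$ with $J_0 = J - J$; hence $J$ lies in the family $U$ associated with this particular $(k+1)$-tuple, and $J_0 \in U_0$ for that tuple. By the choice of the corresponding $\epsilon_0$, and since $\epsilon_1 \le \epsilon_0$, the open $\epsilon_1$-neighborhood $B^{\circ}(\hat{x},\epsilon_1) \cap \mathcal{S}^{d-1}$ of $\hat{x}$ on the sphere is disjoint from $J_0$; since this neighborhood already lies in $\mathcal{S}^{d-1}$, this is the same as saying it is disjoint from the direction $J_0 \cap \mathcal{S}^{d-1}$ of $J$.

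Next I would argue by contradiction: suppose some $x \ne 0$ belongs to $J_0 \cap \mathcal{C}(\hat{x},\epsilon_1)$. Because $J_0$ is a linear subspace and $\mathcal{C}(\hat{x},\epsilon_1)$ is closed under multiplication by positive scalars, the unit vector $u = x/\|x\|$ again lies in $J_0$ and in $\mathcal{C}(\hat{x},\epsilon_1)$. From $u \in \mathcal{C}(\hat{x},\epsilon_1)$ and $\|u\| = 1$ we get $\langle \hat{x}, u \rangle \ge \cos\epsilon_1$, so the angle $\theta$ between $u$ and $\hat{x}$ satisfies $\theta \le \epsilon_1$, and therefore $\|u - \hat{x}\| = 2\sin(\theta/2) \le 2\sin(\epsilon_1/2) < \epsilon_1$, the last inequality being the elementary estimate $\sin t < t$ for $t>0$. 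Thus $u \in B^{\circ}(\hat{x},\epsilon_1) \cap \mathcal{S}^{d-1} \cap J_0$, contradicting the previous paragraph. Hence $\mathcal{C}(\hat{x},\epsilon_1) \cap J_0 = \emptyset$, and since $J_0 \in \A$ was arbitrary, the observation follows.

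The statement is really a repackaging of the construction preceding it, so there is no serious obstacle; the one point that requires a little care — and where a sloppy argument could slip — is the mismatch between the two ways the aperture is measured: the neighborhoods on $\mathcal{S}^{d-1}$ defining $\epsilon_0$ and $\epsilon_1$ are Euclidean balls, whereas $\mathcal{C}(\hat{x},\epsilon_1)$ is defined by the angular condition $\langle \hat{x},x\rangle \ge \|x\|\cos\epsilon_1$. The inequality $2\sin(\epsilon_1/2) < \epsilon_1$ shows that the angular cone of aperture $\epsilon_1$, restricted to the sphere, sits strictly inside the Euclidean $\epsilon_1$-cap, so no constant is lost; alternatively, one could sidestep the trigonometric estimate altogether by defining $\epsilon_1$ as an angular aperture from the outset, which would make the observation immediate.
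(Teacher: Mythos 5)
Your proof is correct and follows essentially the same route as the paper, which states the observation without a separate proof as an immediate consequence of the construction of $\epsilon_0$ and $\epsilon_1$; you simply unwind those definitions, using that $J_0$ is a linear subspace and the cone is scale-invariant. Your explicit check that the angular cone of aperture $\epsilon_1$ restricted to $\mathcal{S}^{d-1}$ lies inside the Euclidean $\epsilon_1$-cap (via $2\sin(\epsilon_1/2)<\epsilon_1$) is a careful touch the paper leaves implicit, but it does not change the argument.
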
 

Let $D \in \Re^+$ be a large constant such that $B^1,\ldots,B^m \subset B(0,D)$. We claim that for all but finitely many elements $B_{n_l}$ of $\bar{\G}$, even the inflated sets $B_{n_l}+B(0,D)$ (in each of which every point in the set is surrounded by a $D$-radius ball) lie entirely within $\mathcal{C}(\hat{x},\epsilon_1)$.

\begin{claim} \label{cl:cone}
	There exists $l_1 \in \N$ such that for every $l \geq l_1$, $B_{n_l}+D\cdot B(0,1) \subset \mathcal{C}(\hat{x},\epsilon_1)$.
\end{claim}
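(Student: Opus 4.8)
The plan is to show that the centers $x_{B_{n_l}}$ escape to infinity along the $d$-axis fast enough that a fixed-radius $D$-inflation of each far-out ball still points in a direction very close to $\hat{x}$.

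\medskip

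\noindent\textbf{Setup.} Write $x_l := x_{B_{n_l}}$ for the center of $B_{n_l}$, and $r_l := r(\mathrm{escribed}(B_{n_l}))$, so that $B_{n_l} \subseteq B(x_l, r_l)$ and hence $B_{n_l} + D\cdot B(0,1) \subseteq B(x_l, r_l + D)$. Thus it suffices to prove that $B(x_l, r_l+D) \subseteq \mathcal{C}(\hat{x},\epsilon_1)$ for all large $l$. Recall that $\mathcal{C}(\hat{x},\epsilon_1)$ consists of all nonzero points whose angle with $\hat{x}$ is at most $\epsilon_1$. A ball $B(y,\rho)$ with $y \neq 0$ and $\rho < \|y\|$ is contained in $\mathcal{C}(\hat{x},\epsilon_1)$ precisely when the half-angle of the cone from the origin tangent to $B(y,\rho)$, namely $\arcsin(\rho/\|y\|)$, plus the angle $\angle(y,\hat{x})$ between $y$ and the axis, is at most $\epsilon_1$. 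So I need two things: (i) $\angle(x_l,\hat{x}) \to 0$, and (ii) $(r_l+D)/\|x_l\| \to 0$.

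\medskip

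\noindent\textbf{Using $\lim x_l = x_0$.} Since $\bar{\G} \subseteq \G$ and $\G$ was obtained from the sequence of Claim~\ref{cl:comp} (after deleting the finitely many sets meeting the $d$-axis), we have $\lim_{l\to\infty} x_l = x_0$, where $x_0$ is the point at infinity on the positive $d$-axis. By the definition of a finite-$\epsilon$-neighborhood of a point at infinity given in Section~\ref{sec:def}, this means exactly that $\|x_l\| \to \infty$ and $x_l/\|x_l\| \to \hat{x}$, i.e. $\angle(x_l,\hat{x}) \to 0$. This gives (i) immediately. For (ii), I must still control $r_l$, which a priori could grow with $l$; this is where the near-ball hypothesis enters, and it is the main (though modest) obstacle.

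\medskip

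\noindent\textbf{Controlling the radius via the near-ball condition.} First note $B_{n_l}$ does not meet the $d$-axis, so $\mathrm{dist}(B_{n_l}, \text{$d$-axis}) > 0$; in particular $0 \notin B_{n_l}$, and writing $s_l := \mathrm{dist}(0, x_B\text{'s inscribed ball}) = \mathrm{dist}(0, B_{n_l})$-side quantities, we have $\|x_l\| \ge \mathrm{dist}(0,B_{n_l}) + r(\mathrm{inscr}(B_{n_l}))$ — more simply, $\|x_l\| \to \infty$. Now apply condition~\textbf{(2)} of Definition~\ref{def:near-balls}: $r(\mathrm{escribed}(B_{n_l})) \le K + r(\mathrm{inscr}(B_{n_l}))$. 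If the radii $r(\mathrm{inscr}(B_{n_l}))$ stay bounded, then $r_l$ is bounded and $(r_l+D)/\|x_l\| \to 0$ trivially. If not, pass to a subsequence along which $r(\mathrm{inscr}(B_{n_l})) \to \infty$; then since $\mathrm{inscr}(B_{n_l}) \subseteq B(x_l, r(\mathrm{inscr}(B_{n_l})))$ and this ball must avoid the $d$-axis (as $B_{n_l}$ does — wait, $\mathrm{inscr}(B_{n_l}) \subseteq B_{n_l}$, so it avoids the axis too), the distance from $x_l$ to the $d$-axis is at least $r(\mathrm{inscr}(B_{n_l}))$, hence the angle $\angle(x_l,\hat{x}) \ge \arcsin\!\big(r(\mathrm{inscr}(B_{n_l}))/\|x_l\|\big)$. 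Since this angle tends to $0$, we get $r(\mathrm{inscr}(B_{n_l})) / \|x_l\| \to 0$, and then by condition~\textbf{(2)}, $r_l/\|x_l\| \le (K + r(\mathrm{inscr}(B_{n_l})))/\|x_l\| \to 0$ as well. Either way (ii) holds.

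\medskip

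\noindent\textbf{Conclusion.} Combining (i) and (ii): for $l$ large, $\|x_l\| > r_l + D$, so $B(x_l, r_l+D)$ is a ball missing the origin, and $\arcsin\!\big((r_l+D)/\|x_l\|\big) + \angle(x_l,\hat{x}) < \epsilon_1$. Therefore $B(x_l,r_l+D) \subseteq \mathcal{C}(\hat{x},\epsilon_1)$, and a fortiori $B_{n_l} + D\cdot B(0,1) \subseteq \mathcal{C}(\hat{x},\epsilon_1)$. Taking $l_1$ to be any index beyond which all the above estimates hold (and beyond the finitely many indices lost in possibly splitting into the bounded/unbounded cases — which can be avoided by just estimating $r_l/\|x_l\|$ directly using $\angle(x_l,\hat{x}) \to 0$ and condition~\textbf{(2)} in one shot) gives the claim. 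The only real subtlety is that $r_l$ is not assumed bounded, so one cannot simply say "$r_l + D$ is a constant"; the near-ball condition~\textbf{(2)}, together with the geometric fact that a far-away ball avoiding the axis must have its inscribed radius small relative to its distance from the origin, is exactly what closes this gap.
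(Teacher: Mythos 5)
Your proof is correct and follows essentially the same route as the paper's: bound the angle at the origin of any point of the inflated set by the angle of the center $x_{B_{n_l}}$ with the $d$-axis plus the angular radius of the inflated escribed ball, and use the fact that $B_{n_l}$ avoids the $d$-axis to get $r(\mathrm{inscr}(B_{n_l}))/\|x_{B_{n_l}}\| \leq \sin\angle(x_{B_{n_l}},\hat{x}) \to 0$. The only (harmless) deviation is that you invoke condition \textbf{(2)} of Definition~\ref{def:near-balls} where the paper uses condition \textbf{(1)} — both close the gap since $\|x_{B_{n_l}}\| \to \infty$ absorbs the additive constant $K$ — and your closing one-shot estimate rightly supersedes the earlier subsequence detour, which by itself would not have yielded the required cofinite statement.
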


\noindent Let us, first, complete the proof of Case 1 assuming Claim \ref{cl:cone}, and then prove the claim.

By claim \ref{cl:cone}, we can choose a sufficiently large $l$, such that $B_{n_l}$ is disjoint from each $k$-flat in $\A$. Let $B^{m+1}=B_{n_l}$ (See Figure \ref{fig:newfig1}.) We claim that $B^1,\ldots,B^{m+1}$ is $k$-independent.

Indeed, assume to the contrary that some $k$-flat $J$ intersects $k+2$ sets from  $B^1,\ldots,B^{m+1}$. $J$ must intersect $B^{m+1}$ and $k+1$ other sets $B^{i_1},\ldots,B^{i_{k+1}} $, for $1 \leq i_1<\ldots<i_{k+1} \leq m$. Note that $J=J_0+q$, where $J_0=J-J$ is the translate of $J$ to the origin, and $q$ is a point in $J \cap B^1$. Therefore, $||q|| \leq D$ and $B^{m+1}\cap (J_0+q)\neq \emptyset$. This means that $(B^{m+1}-q) \cap J_0 \neq \emptyset$. But $B^{m+1}-q$ is contained in the inflation of $B_{n_l}=B^{m+1} \subset B^{m+1}+D\cdot B(0,1)$, which is disjoint from $J_0$ by Observation \ref{obs:cone} and Claim \ref{cl:cone}, a contradiction.

The only part that remains is to prove Claim \ref{cl:cone}.
\begin{figure}[tb]		
	\begin{center}
		\scalebox{0.7}{
			\includegraphics{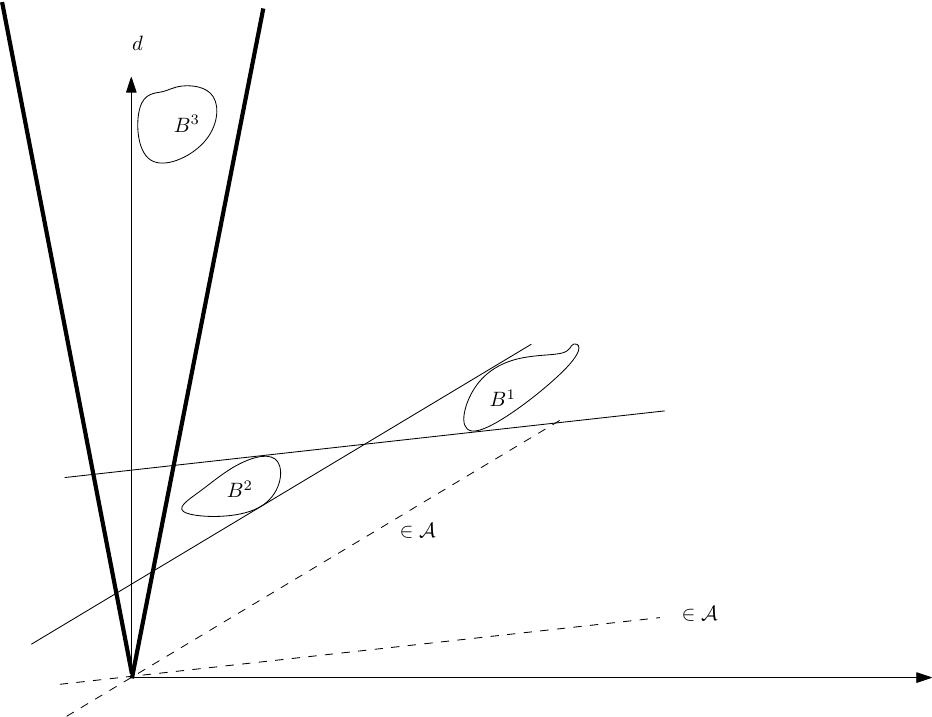}
		}
	\end{center}
	\caption{An illustration for the proof of Case 1 for $k=1, m=d=2$.}
	\label{fig:newfig1}
\end{figure}
\begin{figure}[tb]		
	\begin{center}
		\scalebox{0.7}{
			\includegraphics{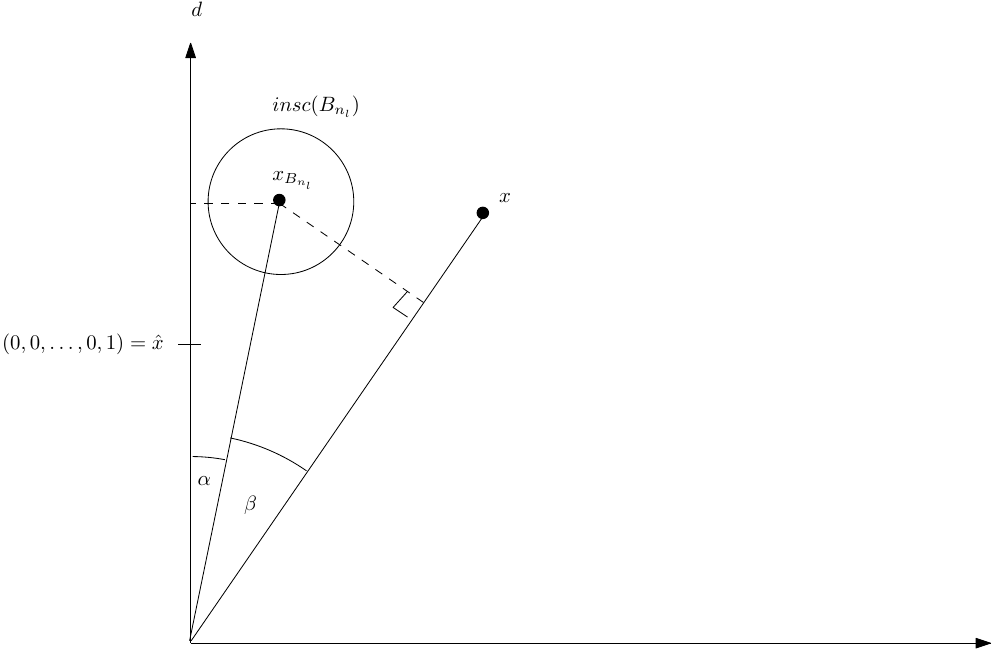}
		}
	\end{center}
	\caption{An illustration for the proof of Claim \ref{cl:cone}.}
	\label{fig:newfig2}
\end{figure}

\begin{proof}[Proof of Claim \ref{cl:cone}]
	Let $K=K(\F)$ and let $\epsilon'=\frac{\epsilon_1}{1+\frac{\pi}{2}(K+D)}$. Since $\lim_{l \to \infty}x_{B_{n_l}}=x_0$, there exists $l_1 \in \N$ such that for every $l>l_1$, $||x_{B_{n_l}}|| \geq \frac{1}{\epsilon'}$ and $x_{B_{n_l}} \in \C(\hat{x},\epsilon')$. Let $\alpha = \measuredangle (\hat{x},0,x_{B_{n_l}})< \epsilon'$, and let $x \in B_{n_l} + D \cdot B(0,1)$. We have to prove that $x \in \C(\hat{x},\epsilon_1)$. Let $\beta=\measuredangle (x_{B_{n_l}},0,x)$ (see Figure \ref{fig:newfig2}). We have to prove that $\measuredangle (\hat{x},0,x )< \epsilon_1$. Since $\measuredangle (\hat{x},0,x )\leq\alpha+\beta$ (note that the 3 rays we consider are not neccessarily on the same plane!), it is sufficient to prove that $\alpha+\beta<\epsilon_1$. Let $r_{n_l}=r(insc(B_{n_l}))$. Since $B_{n_l}$ is not pierced by the $d$-axis (as was ensured at the beginning of this case) $dist(x_{B_{n_l}}, d \mbox{-axis})<r_{n_l}$.
	
	By the first condition of Definition \ref{def:near-balls}, $||x - x_{B_{n_l}}|| \leq K \cdot r_{n_l}+D$. Furthermore, the distance of $x_{B_{n_l}}$ from the ray $\overrightarrow{0x}$ equals $||x_{B_{n_l}}||\cdot \sin \beta$ and is $\leq ||x - x_{B_{n_l}}|| \leq K \cdot r_{n_l}+D$. Hence
	$$\sin \beta \leq  \frac{K \cdot r_{n_l}+D}{||x_{B_{n_l}}||}.$$
	We have
	\begin{align*}
		\beta &\leq \frac{\pi}{2}\sin \beta \leq \frac{\frac{\pi}{2}(K \cdot r_{n_l}+D)}{||x_{B_{n_l}}||}
	=\frac{\pi}{2} (  \frac{Kr_{n_l}}{  ||x_{B_{n_l}}||  } + \frac{D}{||x_{B_{n_l}}||} )
	\leq \frac{\pi}{2} (K\cdot \sin \alpha +\frac{D}{||x_{B_{n_l}}||} )
	\\
	&\leq \frac{\pi}{2} (K\alpha + D  \epsilon')<
	\frac{\pi}{2} (K\epsilon' + D  \epsilon').
	\end{align*}
	We get $$\alpha +\beta < \epsilon'(1+\frac{\pi}{2}(K+D))=\epsilon_1$$
	and this completes the proof.
\end{proof}

\noindent \textbf{Case 2: The strong point $x_0$ is a finite point:} Assume w.l.o.g. that $K=K(\F)$ is sufficiently large, and that $x_0$ is the origin, and let $\G=\{B_n\}_{n=1,2,\ldots} \subset \F$ be the sequence guaranteed by Claim \ref{cl:comp}. Let $\alpha$ be a small angle such that $\alpha(1+\pi K /2) < \pi /4$. Let $\L$ be a finite set
of rays emanating from the origin, such that the intersections of the cones 
$$ \C_{\ell} =\C(u_{\ell},\alpha) =   \{ x \in \Re^d: <u_{\ell},x> \geq ||x|| \cos \alpha \},$$ (where $u_{\ell}$ is a unit vector, based in the origin, on $\ell \in \L$) with  $\mathcal{S}^{d-1}$, covers  $\mathcal{S}^{d-1}$. Namely, we partition $\Re^d$ into finitely many cones around the rays in $\L$, each of which with aperture $\alpha$ (possibly some of the cones have non-empty intersection). Remove from $\G$ all sets that are pierced by the rays in $\L$, so the remainig set, which we still call $\G$, cannot be pierced by finitely many $k$-flats. (recall that here, in the induction step, $k \geq 1$.) Note that since no element in $\G$ contains the origin, $\lim_{n \to \infty} r(escribed(B_n))=0$. Since $\G$ cannot be pierced by finitely many $k$-flats, there exists a cone $\C_{\ell}$ such that the subfamily $\{ B_j \in \G : x_{B_j} \in \C_{\ell}   \}$
cannot be pierced by finitely many $k$-flats. 

Assume w.l.o.g. that $u_{\ell}=(0,\ldots,0,-1)$, namely, $\ell$ is the negative ray of the $d$-axis, and $\C_{\ell}$ is a cone with aperture $\alpha$ around $\ell$.

\begin{claim} \label{cl:wide-cone}
	For each $B_j \in \G$ with $x_{B_j} \in \C_{\ell}$, $escribed(B_j)$ is contained in a cone around $\ell$ with aperture $<\pi /4$.
\end{claim}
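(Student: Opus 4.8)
The plan is to establish the stronger, quantitative statement that $\mathrm{escribed}(B_j)=B(x_{B_j},R_j)$, where $R_j:=r(\mathrm{escribed}(B_j))$, lies inside the cone $\C(u_\ell,\gamma)$ of aperture $\gamma:=\alpha\bigl(1+\tfrac{\pi K}{2}\bigr)$, which is $<\pi/4$ by the choice of $\alpha$. The mechanism is to bound the angular radius of the ball $B(x_{B_j},R_j)$ as seen from the origin by $\arcsin(R_j/\|x_{B_j}\|)$, add to it the angular distance $\le\alpha$ of $x_{B_j}$ from $u_\ell$, and invoke the triangle inequality for angles at the origin (the same inequality used in the proof of Claim~\ref{cl:cone}).

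First I would record two consequences of the deletion step. Since the rays of $\L$ all emanate from the origin, any $B_j\in\G$ containing the origin would be met by every ray of $\L$ and hence would have been removed; thus $0\notin B_j$ (as already noted), and, because $\ell\in\L$, also $\ell\cap B_j=\emptyset$. Writing $r_j:=r(\mathrm{inscr}(B_j))$ and using $\mathrm{inscr}(B_j)=B(x_{B_j},r_j)\subseteq B_j$, the first fact gives $\|x_{B_j}\|>r_j$; in particular $x_{B_j}\ne0$.

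The key step is the metric inequality $r_j<\|x_{B_j}\|\sin\alpha$. Since $x_{B_j}\in\C_{\ell}=\C(u_\ell,\alpha)$ and $x_{B_j}\ne0$, the angle $\theta:=\measuredangle(u_\ell,0,x_{B_j})$ satisfies $\theta\le\alpha<\pi/2$; hence the orthogonal projection of $x_{B_j}$ onto the line spanned by $u_\ell$ is a positive multiple of $u_\ell$ and therefore lies on the ray $\ell$, so $dist(x_{B_j},\ell)=\|x_{B_j}\|\sin\theta\le\|x_{B_j}\|\sin\alpha$. On the other hand, the closed ball $\mathrm{inscr}(B_j)$ is disjoint from the closed ray $\ell$, so $dist(x_{B_j},\ell)>r_j$. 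Combining these, $r_j<\|x_{B_j}\|\sin\alpha$, and hence, by condition~\textbf{(1)} of Definition~\ref{def:near-balls}, $R_j\le Kr_j<K\|x_{B_j}\|\sin\alpha$. In particular $R_j/\|x_{B_j}\|<K\sin\alpha\le K\alpha<1$, so the origin lies outside $B(x_{B_j},R_j)$ and the tangent cone from the origin to this ball has half-angle $\arcsin(R_j/\|x_{B_j}\|)$. Then for every $x\in\mathrm{escribed}(B_j)$,
\begin{align*}
\measuredangle(u_\ell,0,x) &\le \measuredangle(u_\ell,0,x_{B_j})+\measuredangle(x_{B_j},0,x)\le\alpha+\arcsin\!\Bigl(\tfrac{R_j}{\|x_{B_j}\|}\Bigr)\\
&\le\alpha+\tfrac{\pi}{2}\cdot\tfrac{R_j}{\|x_{B_j}\|}<\alpha+\tfrac{\pi}{2}K\sin\alpha\le\alpha\Bigl(1+\tfrac{\pi K}{2}\Bigr)=\gamma<\tfrac{\pi}{4},
\end{align*}
using $\arcsin t\le\tfrac{\pi}{2}t$ for $t\in[0,1]$ (convexity of $\arcsin$ on $[0,1]$). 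Thus $\mathrm{escribed}(B_j)\subseteq\C(u_\ell,\gamma)$, a cone around $\ell$ of aperture $\gamma<\pi/4$.

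I expect the only non-mechanical point -- the ``main obstacle'' -- to be realizing that the deletion step buys not merely ``$B_j$ avoids $\ell$'' but, together with the narrowness of the cone $\C_{\ell}$, the quantitative control $r_j<\|x_{B_j}\|\sin\alpha$ on the inscribed radius relative to the distance to the apex; once that is in place, the rest is the elementary estimate $\arcsin t\le(\pi/2)t$ and the way $\alpha$ was fixed. A minor care point is verifying that the perpendicular foot from $x_{B_j}$ actually lands on the ray $\ell$ and not on its extension, which is precisely where $\theta<\pi/2$ is used.
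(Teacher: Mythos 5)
Your proof is correct and follows essentially the same route as the paper's: bound the angle between $u_\ell$ and $\overrightarrow{0x_{B_j}}$ by $\alpha$, use the disjointness of $\mathrm{inscr}(B_j)$ from $\ell$ plus condition \textbf{(1)} to get $R_j/\|x_{B_j}\|\le K\sin\alpha$, bound the angular radius of $\mathrm{escribed}(B_j)$ seen from the origin by $\tfrac{\pi}{2}K\alpha$, and add the two angles via the spherical triangle inequality to land below $\alpha(1+\tfrac{\pi K}{2})<\pi/4$. Your write-up is in fact slightly more careful than the paper's (checking the perpendicular foot lies on the ray and that $0\notin B(x_{B_j},R_j)$), but there is no substantive difference.
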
	

\begin{proof}[Proof of Claim \ref{cl:wide-cone}]
	Consider $insc(B_j)$ which is disjoint with $\ell$, since we removed from $\G$ all the sets that are stabbed by the rays in $\L$. Let $t>r(insc(B_j))=r$ be the distance between $x_{b_j}$ and $\ell$ (see Figure \ref{fig:wide}). Let $p \in escribed(B_j)$, and by the first condition of Definition \ref{def:near-balls}, $p \in B(x_{B_j},Kr)$. W.l.o.g., $p$ is on the boundary of $B(x_{B_j},Kr)$, as illustrated in Figure \ref{fig:wide}.
	
	\begin{figure}[tb]		
		\begin{center}
			\scalebox{0.7}{
				\includegraphics{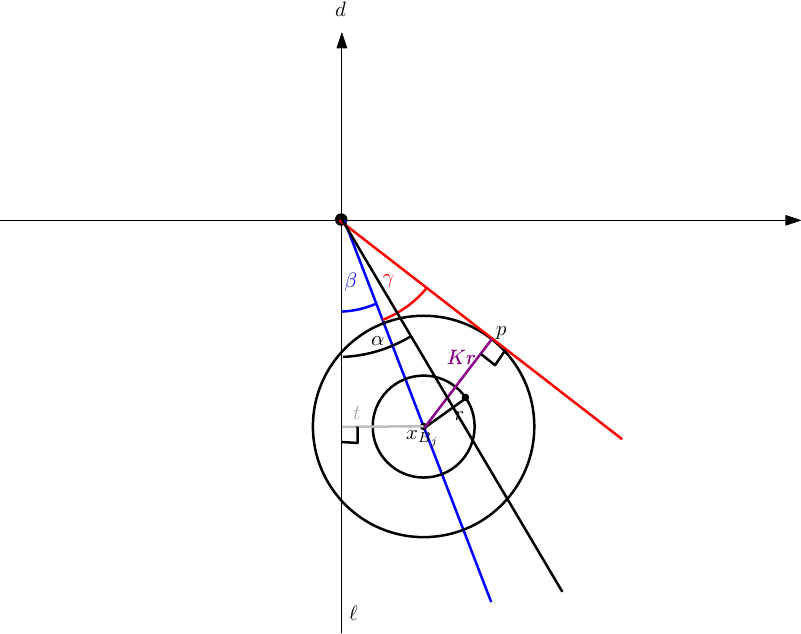}
			}
		\end{center}
		\caption{An illustration for the proof of Claim \ref{cl:wide-cone}.}
		\label{fig:wide}
	\end{figure}

	Let $\beta$ be the angle between $\ell$ and $\overrightarrow{0x_{B_j}}$, and let $\gamma$ be the angle between $\overrightarrow{0x_{B_j}}$ and $\overrightarrow{0p}$. Since $x_{B_j}\in \C_{\ell}$, we have $\beta \leq \alpha$.
	
	By basic trigonometry we have $\frac{r}{||x_{B_j}||} \leq \frac{t}{||x_{B_j}||} \leq \sin \beta \leq \sin\alpha$. Similarly, $\sin \gamma = \frac{Kr}{||x_{B_j}||}$, since the distance between $x_{B_j}$ to $\overrightarrow{0p}$ is $Kr$. By these two inequalities, $$ \sin \gamma =  \frac{Kr}{||x_{B_j}||} \leq K \sin \alpha \leq K \alpha .$$ But $\gamma \leq \frac{\pi }{2} \sin \gamma$ (since this holds for any $0 < \gamma < \pi/2$), hence $\gamma \leq \frac{\pi }{2} K \alpha$.
	
	Now, the angle between $\ell$ and $\overrightarrow{0p}$ is at most $$ \beta + \gamma \leq \alpha + \frac{\pi }{2} K \alpha = \alpha(1+\frac{\pi K }{2}) < \pi /4 ,$$ by our restriction on $\alpha$. We get that $escribed( B_{j})$ is contained in a cone with aperture $< \pi / 4$ around $\ell$.
\end{proof}

Let $\pi$ be the central projection from the origin of all elements of $\{ B_j \in \G : x_{B_j} \in \C_{\ell} \}$ onto the linear $(d-1)$-space that is tangent to $\mathcal{S}^{d-1}$ at the point $(0,\ldots,0,-1)$.
(Since $\alpha$ is small, the central projection from 0 to $\mathcal{S}^{d-1}$ and to this linear $(d-1)$-space, is almost the same.) Let the resulting set be $\G'$. Note that Claim \ref{cl:wide-cone} prevents a situation in which some $B \in \{ B_j \in \G : x_{B_j} \in \C_{\ell} \}$ surrounds the origin, or more generally, in which $\pi(B)$ is unbounded. 
  Each ball is projected by $\pi$ onto an ellipsoid with a bounded ratio between the largest and the shortest axis. Hence, the family $\G'$ consists of compact near-balls in $\Re^{d-1}$, and one can show that $K(\G')\leq \sqrt{2}K(\F)$. The exact calculation is presented in Claim \ref{cl:KtoK} below.

Since $\G'$ cannot be pierced by finitely many $(k-1)$-flats (as otherwise, the $k$-flats that each of them is spanned by one of these $(k-1)$-flats together with the origin, pierce all the sets in $\G$), by the induction hypothesis, there exists an infinite subset of $\G'$ which is $(k-1)$-independent in the linear $(d-1)$-dimentional space $\{  (x_1,\ldots,x_d): x_d=-1 \}$. Let $\bar{\G} = \{ B_{n_l}: l=1,2,\ldots  \}$ be the sequence of pre-images of this infinite subset under $\pi$. Namely, $x_{B_{n_l}} \to x_0$ and no $k$-flat $\tau$ through the origin pierces $k+1$ elements of $\bar{\G}$ (as otherwise, $\pi(\tau)$ pierces $k+1$ elemnets of $\G'$, in contradiction to the $(k-1)$-independence of $\pi(\bar{\G})$).

From now on, we construct inductively a subsequence $\{B^i\}_{i=1,2,\ldots} \subset \bar{\G}$ that is $k$-independent. As in Case 1, the first $k+1$ elements $B^1,\ldots,B^{k+1} \in \bar{\G}$ can be chosen arbitrarily. By the compactness of the elements in $\bar{\G}$, and similarly to the analysis in Case 1, the union of all $k$-flats that pierce $B^1,\ldots,B^{k+1}$ is a closed set with a positive distance from the origin. Therefore, the next set $B^{k+2} \in \bar{\G}$ can be chosen close enough to the origin, such that no $k$-flat that pierces $B^1,\ldots,B^{k+1}$ intersects $B^{k+2}$ (see Figure \ref{fig:fig5}). Continuing inductively in the same manner, (each time for all the $(k+1)$-subsets of the so-far-chosen sets), we obtain a $k$-independent subsequence $\{B^i\}_{i=1,2,\ldots}$ of $\bar{\G}$.

\begin{figure}[tb]		
	\begin{center}
		\scalebox{0.7}{
			\includegraphics{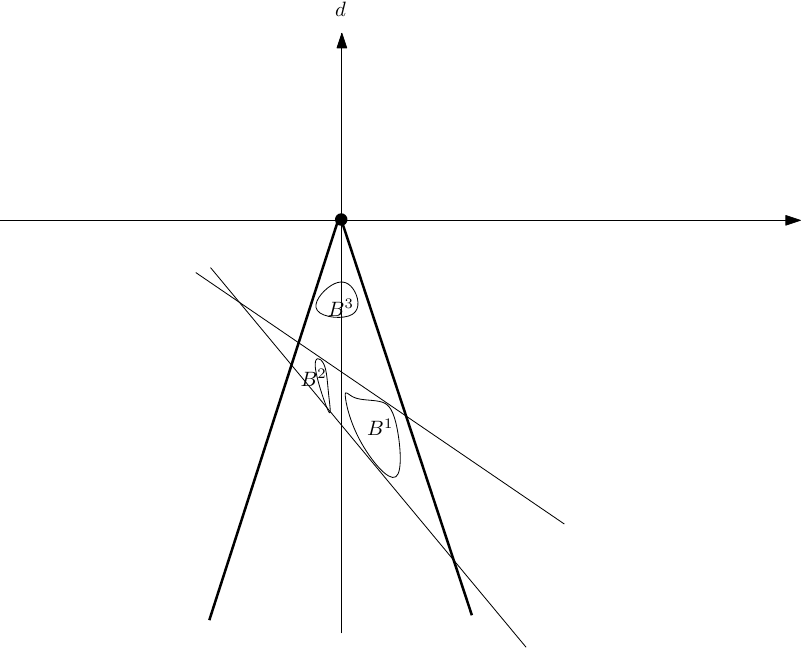}
		}
	\end{center}
	\caption{An illustration for the inductive construction in Case 2, where $k=1$ and $d=2$.}
	\label{fig:fig5}
\end{figure}

\bigskip

The only part that remains to complete the proof, is to show that $K(\G')\leq \sqrt{2}K(\F)$.

\begin{claim}\label{cl:KtoK}
	In the notations above, let $B \in \{   B_j \in \G : x_{B_j} \in \C_{\ell}  \}$. Then the ratio between $r(escribed(\pi(B)))$ and $r(insc(\pi(B)))$ is at most $\sqrt{2}K$.
\end{claim}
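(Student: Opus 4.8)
Fix $B\in\{B_j\in\G:x_{B_j}\in\C_{\ell}\}$ and abbreviate $x:=x_B$, $r:=r(\mathrm{insc}(B))$, $R:=r(\mathrm{escribed}(B))$, so $\mathrm{insc}(B)=B(x,r)$, $\mathrm{escribed}(B)=B(x,R)$ and $R\le Kr$. Recall $\ell$ is the negative $d$-axis, $\C_{\ell}$ has half-angle $\alpha$ with $\alpha(1+\pi K/2)<\pi/4$, and $B$ was not deleted, so $\mathrm{insc}(B)$ misses $\ell$; hence $\mathrm{dist}(x,\ell)>r$, and since $x\in\C_{\ell}$ this forces $\|x\|>r/\sin\alpha$, so $R\le Kr<K\|x\|\sin\alpha$: the whole set $B\subseteq B(x,R)$ is tiny compared with its distance $\|x\|$ to the origin. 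By Claim~\ref{cl:wide-cone}, $B\subseteq\mathrm{escribed}(B)$ lies in a cone of half-angle $<\pi/4$ about $\ell$, so $\cos\theta_y>1/\sqrt2$ for every $y\in B$, where $\theta_y$ is the angle between $\overrightarrow{0y}$ and $\ell$. Write $b_0:=|x_d|=\|x\|\cos\theta_x>0$.

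The plan is to show that the central projection $\pi$ from $0$ onto the hyperplane $P=\{x_d=-1\}$ acts on $B$ almost like a similarity, and then to read off the inscribed/escribed radii of $\pi(B)$. First I would record the local distortion of $\pi$: at $y$ with $y_d<0$ the differential $d\pi_y$ has singular values $\tfrac1{\|y\|\cos\theta_y}$ (multiplicity $d-2$) and $\tfrac1{\|y\|\cos^2\theta_y}$; equivalently, $\pi(B(x,r))$ and $\pi(B(x,R))$ are the sections by $P$ of the two coaxial circular cones with apex $0$ erected over $B(x,r)$ and over $B(x,R)$, hence are solid ellipsoids whose semi-axes and centers I would write out explicitly. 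Because $B(x,r)$ and $B(x,R)$ subtend only tiny angles from $0$ (as $R/\|x\|\le K\sin\alpha$) and $\cos\theta_y$ stays near $1$ on $B$, both ellipsoids are round balls up to a multiplicative factor $1+O(\alpha^2)$ with centers within $O(\alpha^2)\cdot\tfrac R{b_0}$ of $\pi(x)$; using only $\cos\theta_y>1/\sqrt2$ (Claim~\ref{cl:wide-cone}) these factors stay bounded by $\sqrt2$ unconditionally, which is the source of the constant in the statement. Concretely I would extract $\rho_1\le\rho_2$ with $\rho_1\ge(1-\varepsilon)\tfrac r{b_0}$, $\rho_2\le(1+\varepsilon)\tfrac R{b_0}$ and
\[
B(\pi(x),\rho_1)\ \subseteq\ \pi(\mathrm{insc}(B))\ \subseteq\ \pi(B)\ \subseteq\ \pi(\mathrm{escribed}(B))\ \subseteq\ B(\pi(x),\rho_2),
\]
where $\varepsilon=\varepsilon(\alpha,K)$ can be made as small as we wish by shrinking $\alpha$. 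In particular $r(\mathrm{insc}(\pi(B)))\ge\rho_1$.

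It then remains to bound $r(\mathrm{escribed}(\pi(B)))=\max_{p\in\pi(B)}\|p-x_{\pi(B)}\|$, where $x_{\pi(B)}$ is the incenter of $\pi(B)$, and I expect this to be the one genuinely delicate point: a naive triangle inequality, $\|p-x_{\pi(B)}\|\le\|p-\pi(x)\|+\|\pi(x)-x_{\pi(B)}\|$, only gives a bound of order $2K$, since it ignores that $\pi$ is nearly conformal on $B$. The remedy I would use is to pin the largest inscribed ball of $\pi(B)$ down to (essentially) $B(\pi(x),\rho_1)$: since $\pi^{-1}$ is also nearly a similarity on the relevant cone and $\mathrm{insc}(B)$ is a \emph{largest} inscribed ball of $B$, no inscribed ball of $\pi(B)$ has radius exceeding $(1+\varepsilon)\tfrac r{b_0}$, and among those of maximal radius the tie-breaking rule in the definition of $\mathrm{inscr}(\cdot)$ forces a center within $o(1)\cdot\tfrac R{b_0}$ of $\pi(x)$; hence $\mathrm{escribed}(\pi(B))$ may be taken centered essentially at $\pi(x)$, giving $r(\mathrm{escribed}(\pi(B)))\le\rho_2\le(1+\varepsilon)\tfrac R{b_0}$. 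Combining,
\[
\frac{r(\mathrm{escribed}(\pi(B)))}{r(\mathrm{insc}(\pi(B)))}\ \le\ \frac{(1+\varepsilon)R/b_0}{(1-\varepsilon)r/b_0}\ \le\ \frac{1+\varepsilon}{1-\varepsilon}\cdot\frac Rr\ \le\ \frac{1+\varepsilon}{1-\varepsilon}\,K\ \le\ \sqrt2\,K
\]
once $\alpha$ (hence $\varepsilon$) is small enough. The main obstacle is thus the control of the incenter of the possibly non-convex set $\pi(B)$; everything else is the explicit, if slightly tedious, trigonometry of conic sections of circular cones, with the $\pi/4$-aperture of Claim~\ref{cl:wide-cone} guaranteeing that all the distortion factors that appear are at most $\sqrt2$.
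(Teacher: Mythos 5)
\noindent Your reduction to a sandwich $B(\pi(x),\rho_1)\subseteq\pi(B)\subseteq B(\pi(x),\rho_2)$ is fine, but the ``remedy'' you propose for the delicate point --- controlling the incenter of $\pi(B)$ --- is exactly where the argument breaks. The assertion that no inscribed ball of $\pi(B)$ has radius exceeding $(1+\varepsilon)\frac{r}{b_0}$ ``since $\pi^{-1}$ is also nearly a similarity'' is false for general near-balls: $\pi$ is a central projection, $B$ need not be convex, and a shadow can have a much larger inradius than the set itself. Concretely, let $B$ be a thin spherical cap, i.e.\ a piece of the sphere of radius $\|x\|$ centred at the origin, of thickness $2r$ and lateral extent $R=Kr$ (with, say, $R\le K$, so both conditions of Definition~\ref{def:near-balls} hold with constant of order $K$, and the cap fits inside $\C_{\ell}$ away from the rays of $\L$). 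Every ray from $0$ through the cap's angular sector meets the cap, so $\pi(B)$ is a nearly round disc of radius about $R/b_0=Kr/b_0$; its inradius exceeds your bound by a factor of order $K$. Likewise, the tie-breaking rule in the definition of $\mathrm{inscr}$ (minimize the escribed radius among largest inscribed balls) does not pin the incenter of $\pi(B)$ near $\pi(x)$: adjoin such a cap, of inradius slightly below $r$, to the ball $B(x,r)$ inside $\mathrm{escribed}(B)$; upstairs $x_B=x$, but downstairs the largest inscribed ball of $\pi(B)$ lies in the cap's shadow, at distance of order $R/b_0$ from $\pi(x)$. So the one step you flag as crucial is unsupported, and with it the passage from the sandwich to the ratio bound.

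\noindent For comparison, the paper's proof is a fixed, non-asymptotic computation: for $D\in\{\mathrm{inscr}(B),\mathrm{escribed}(B)\}$ it takes a tangency point $x'$ (tangent line through $0$), sets $y=\pi(x)$, $y'=\pi(x')$, and bounds $\|yy'\|$ by the law of sines in the triangle $0yy'$, using $\beta+\zeta<\pi/4$ from Claim~\ref{cl:wide-cone} to get $\sin\measuredangle(0,y',y)\ge 1/\sqrt2$, together with $\sin\zeta_{\mathrm{esc}}=R/\|x\|\le K\,r/\|x\|=K\sin\zeta_{\mathrm{ins}}$; the $\sqrt2$ comes from the fixed $\pi/4$ aperture, not from letting $\alpha\to 0$ (indeed, ``in the notations above'' $\alpha$ only satisfies $\alpha(1+\pi K/2)<\pi/4$, so your ``$\varepsilon$ as small as we wish'' would require retuning the choice of $\alpha$ earlier in Case~2). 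The incenter subtlety you raise is genuine --- the paper's one-line reduction of the ratio to $\max\|yy'\|/\min\|yy'\|$ passes over it as well --- but it is harmless for the role the claim plays: from the sandwich one gets, by the triangle inequality, $r(\mathrm{escribed}(\pi(B)))/r(\mathrm{insc}(\pi(B)))\le 2\rho_2/\rho_1-1\le 2\sqrt2\,K$, and any bound depending only on $K$ (plus the trivial uniform boundedness of the projected sets, for condition~(2)) is all the induction needs to conclude that $\G'$ is a family of near-balls. In other words, the ``order $2K$'' bound you discarded as insufficient is in fact perfectly adequate, whereas the attempt to force the exact constant $\sqrt2 K$ by locating the incenter of the possibly non-convex set $\pi(B)$ is both unnecessary and, as it stands, unsound.
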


\begin{proof}[Proof of Claim \ref{cl:KtoK}]
	As above, we can assume that $\ell$ is the negative ray of the $d$-axis, and by Claim \ref{cl:wide-cone}, $escribed(B)$ is included in a cone around $\ell$ with aperture$<\frac{\pi}{4}$. Let $D$ be some ball centered at $x=x_B$, that is included in a $\frac{\pi}{4}$-cone around $\ell$. (At the end of the proof, we shall substitute in $D$ the two balls - $insc(B)$ and $escribed(B)$.)
	
	\begin{figure}[tb]		
		\begin{center}
			\scalebox{0.7}{
				\includegraphics{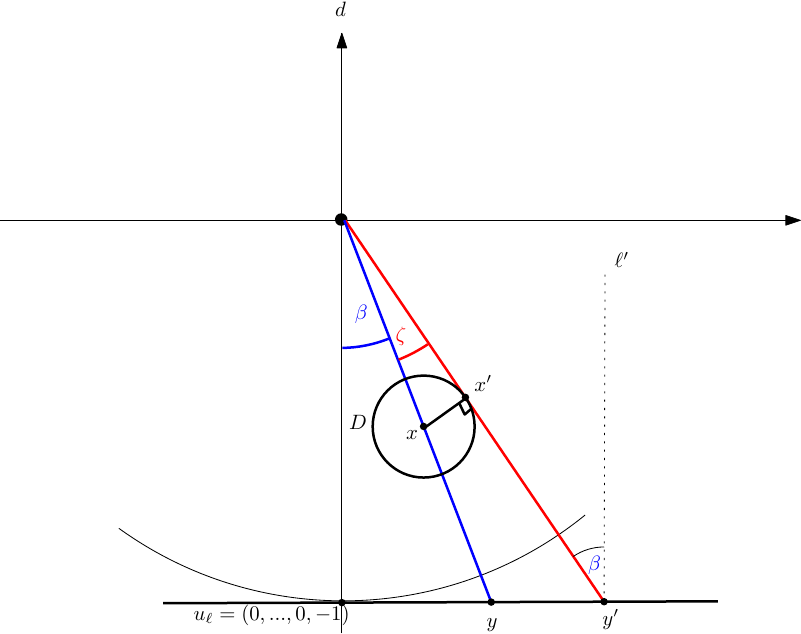}
			}
		\end{center}
		\caption{An illustration for Claim \ref{cl:KtoK}.}
		\label{fig:KtoK}
	\end{figure}

	Let $x' \in boundary(D)$ be a point that lies on a line through 0 tangent to $D$. Let $y=\pi(x)$ and $y'=\pi(x')$ (see Figure \ref{fig:KtoK}). Let $\beta$ be the angle between $\ell$ and $\overrightarrow{0x}$, and let $\zeta$ be the angle between $\overrightarrow{0x}$ and $\overrightarrow{0x'}$. In the notations of Case 2 above, $\beta \leq \alpha$ and $\beta+\zeta \leq \frac{\pi}{4}$.
	
	Note that $x$ is a fixed point, whereas $x'$ takes values within a certain $(d-2)$-subsphere of $boundary(D)$. The length which we would like to control is $||yy'||$, since the ratio between the radii $r(escribed(\pi(B)))$ and $r(insc(\pi(B)))$ is at most the maximal possible value of $||yy'||$ where $D=escribed(B)$, divided by the minimal possible value of $||yy'||$ where $D=insc(B)$.
	
	\medskip
	
	Note that the angle between $\ell$ and $\overrightarrow{0y'}$, is between $\beta+\zeta$ and $|\beta-\zeta|$ (clearly, the extremal values $|\beta \pm  \zeta|$ is obtained when $x'$ is on the plane determined by $\triangle 0u_{\ell}y$).
	Consider the ray $\ell'$ emanating from $y$ and parallel to $\ell$. The angle between $\ell'$ and $0y$ is $\beta$ (see Figure \ref{fig:KtoK}).
	Therefore, the angle $\measuredangle (0,y,y)'$ is between $(\frac{\pi}{2}-\beta)$ and $(\frac{\pi}{2}+\beta)$, and the angle $\measuredangle (0,y',y)$ is between ($\frac{\pi}{2}-\beta-\zeta)$ to $(\frac{\pi}{2}+|\beta-\zeta|$). In the right triangle $\triangle 0u_{\ell}y$, $||0y||=\frac{1}{\cos \beta}$. By the law of sines in $\triangle 0yy'$,
	$$         \frac{||yy'||}{\sin \zeta}  =  \frac{1/ \cos \beta}{\sin \measuredangle  (0,y',y)} .      $$
	Hence, the maximal possible value of $||yy'||$ is at most $\frac{\sin \zeta}{\cos \beta  \sin \frac{\pi}{4}}$ (since $\beta$+$\zeta$<$\frac{\pi}{4}$), and the minimal possible value of $||yy'||$ is at least $\frac{\sin \zeta}{\cos \beta \cdot 1}$. 
	
	When we substitute in $D$, $insc(B)$ and $escribed(B)$, the angle $\beta$ remains the same, but $\sin \zeta$ for $D=escribed(B)$ is at most $K$ times $\sin \zeta$ for $D=insc(B)$ (since $\sin \zeta$ is the ratio between $||xx'||$ and $||0x||$, and $x$ is the common center of both $insc(B)$ and $escribed(B)$). Hence, the ratio between $r(escribed(\pi(B)))$ and $r(insc(\pi(B)))$ is at most $\sqrt{2}K$, as asserted.
	  
\end{proof}

\section{Proof of Proposition~\ref{thm:ex}}\label{sec:anti}

In this section we prove Proposition~\ref{thm:ex}. Namely, we construct an infinite family of open discs in the plane that satisfies the $(3,3)$-property with respect to line transversals, but cannot be pierced by a finite number of lines.    

\begin{proof}[Proof of Proposition~\ref{thm:ex}]
	Let $\mathcal{F}=\{\F_n\}_{n=1}^{\infty} \subset \mathbb{R}^2$, where $\F_n=B(n,1/n)$ is an open disc centered at $(n,\frac{1}{n})$ with radius $\frac{1}{n}$. The family $\mathcal{F}$ does not admit a finite line transversal, since the $x$-axis meets no element of $\F$, any line that is parallel to the $x$-axis meets finitely many elements of $\F$, and any other line intersects only a finite subfamily of $\mathcal{F}$.
	
	\begin{figure}[tb]
		\begin{center}
			\scalebox{0.6}{
				\includegraphics{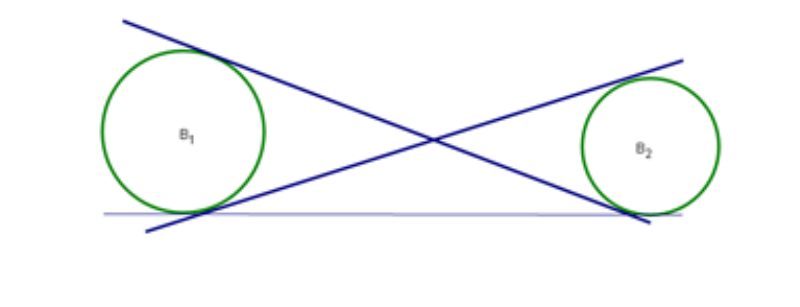}
			}
		\end{center}
		\caption{An illustration for Section \ref{sec:anti}.}
		\label{fig}
	\end{figure}
	
	On the other hand, any $\F' \subset \mathcal{F}$ which is independent w.r.t.~ lines, satisfies $|\F'| \leq 2$. Indeed, consider the two leftmost discs $B_1,B_2 \in \F'$. The right wedge that the two common inner tangents of $B_1$ and $B_2$ form, includes all elements of $\mathcal{F}$ that are to the right of $B_1$ and $B_2$ (see Figure \ref{fig}). Therefore, any element of $\mathcal{F}$ that lies to the right of $B_1$ and $B_2$ is pierced by a line that passes through $B_1$ and $B_2$, and hence cannot be contained in $\F'$.
\end{proof}

Note that the construction above admits even a stronger property than required: each finite subset of $\F$ can be pierced by a horizontal line, and each infinite subset of $\F$ cannot be pierced by finitely many lines.
We also note that no similar example could be constructed with closed balls, since by the Danzer-Gr\"{u}nbaum-Klee theorem~\cite{DGK63}, such a family would be pierced by a single line.


\section*{Acknowledgements}
The authors are grateful to Andreas Holmsen and to the anonymous reviewers for valuable suggestions and information.

\bibliographystyle{plain}
\bibliography{refs}

	
	

\end{document}